\documentclass[12pt]{amsart}
\usepackage{a4wide,enumerate,color}
\allowdisplaybreaks

\let\pa\partial  
\let\na\nabla  
\let\eps\varepsilon  
\newcommand{\N}{{\mathbb N}}  
\newcommand{\R}{{\mathbb R}} 
\newcommand{\diver}{\operatorname{div}}  
\newcommand{\HH}{{\mathcal H}}

\newtheorem{theorem}{Theorem}   
\newtheorem{lemma}[theorem]{Lemma}   
\newtheorem{proposition}[theorem]{Proposition}   
\newtheorem{remark}[theorem]{Remark}

 
\begin{document}  

\title[Cross-diffusion population systems]{Global existence analysis of 
cross-diffusion population systems for multiple species}

\author[X. Chen]{Xiuqing Chen}
\address{School of Sciences, Beijing University of Posts and Telecommunications, 
Beijing 100876, China}
\email{buptxchen@yahoo.com}

\author[E. S. Daus]{Esther S. Daus}
\address{Institute for Analysis and Scientific Computing, Vienna University of  
	Technology, Wiedner Hauptstra\ss e 8--10, 1040 Wien, Austria}
\email{esther.daus@tuwien.ac.at} 

\author[A. J\"ungel]{Ansgar J\"ungel}
\address{Institute for Analysis and Scientific Computing, Vienna University of  
	Technology, Wiedner Hauptstra\ss e 8--10, 1040 Wien, Austria}
\email{juengel@tuwien.ac.at} 

\date{\today}

\thanks{The first author acknowledges support from the National Natural Science 
Foundation of China, grant 11471050. 
The last two authors acknowledge partial support from   
the Austrian Science Fund (FWF), grants P22108, P24304, and W1245} 

\begin{abstract}
The existence of global-in-time weak solutions to reaction-cross-diffusion systems
for an arbitrary number of competing population species is proved. The equations 
can be derived from an on-lattice random-walk model with general transition rates.
In the case of linear transition rates, it extends the two-species
population model of Shigesada, Kawasaki, and Teramoto. The equations are
considered in a bounded domain with homogeneous Neumann boundary conditions. 
The existence proof is based on a refined entropy method 
and a new approximation scheme.
Global existence follows under a detailed balance or weak cross-diffusion
condition. The detailed balance condition is related to the symmetry of the
mobility matrix, which mirrors Onsager's principle in thermodynamics.
Under detailed balance (and without reaction), the entropy is nonincreasing 
in time, but counter-examples
show that the entropy may increase initially if detailed balance does not hold.
\end{abstract}

\keywords{Population dynamics, Shigesada-Kawasaki-Teramoto system, competition model,
detailed balance, entropy method, global existence of weak solutions, Onsager's
principle.}  

\subjclass[2000]{35K51, 35Q92, 92D25, 60J10}  

\maketitle


\section{Introduction}

Shigesada, Kawasaki, and Teramoto
suggested in their seminal paper \cite{SKT79} a diffusive Lotka-Volterra
system for two competing species, which is able to describe the segregation
of the population and to show pattern formation when time increases. 
Starting from an on-lattice random-walk model, this system was extended to an
arbitrary number of species in \cite[Appendix]{ZaJu16}. While the existence analysis
of global weak solutions to the two-species model is well understood by now 
\cite{ChJu04,ChJu06}, only very few results for the $n$-species model under very
restrictive conditions exist (see the discussion below).
In this paper, we provide for the first time
a global existence analysis for an arbitrary number
of population species using the
entropy method of \cite{Jue15}, and we reveal an astonishing 
relation between the monotonicity of the entropy and the detailed balance
condition of an associated Markov chain.

More specifically, we consider the reaction-cross-diffusion equations
\begin{equation}\label{1.eq}
  \pa_t u_i - \diver\bigg(\sum_{j=1}^n A_{ij}(u)\na u_j\bigg) = f_i(u)
	\quad\mbox{in }\Omega,\ t>0, \quad i=1,\ldots,n,
\end{equation}
with no-flux boundary and initial conditions
\begin{equation}\label{1.bic}
  \sum_{j=1}^n A_{ij}(u)\na u_j\cdot\nu = 0\quad\mbox{on }\pa\Omega,\ t>0, \quad
	u_i(\cdot,0)=u_i^0\quad\mbox{in }\Omega. 
\end{equation}
Here, $u_i$ models the density of the $i$th species, $u=(u_1,\ldots,u_n)$,
$\Omega\subset\R^d$ ($d\ge 1$) is a bounded domain with Lipschitz 
boundary, and $\nu$ is the exterior unit normal vector to $\pa\Omega$.
The diffusion coefficients are given by
\begin{equation}\label{1.A}
  A_{ij}(u) = \delta_{ij}p_i(u) + u_i\frac{\pa p_i}{\pa u_j}(u), \quad
	p_i(u) = a_{i0} + \sum_{k=1}^n a_{ik}u_k^s, \quad i,j=1,\ldots,n,
\end{equation}
where $a_{i0}$, $a_{ij}\ge 0$ and $s>0$. The functions $p_i$ are the transition rates
of the underlying random-walk model \cite{Jue16,ZaJu16}. 
The source terms $f_i$ are of Lotka-Volterra type,
\begin{equation}\label{1.f1}
  f_i(u) = u_i\bigg(b_{i0} - \sum_{j=1}^n b_{ij}u_j\bigg), \quad i=1,\ldots,n,
\end{equation}
and we suppose that $b_{i0}$, $b_{ij}\ge 0$ (competition case). Note that
\eqref{1.eq} can be written more compactly as
$$
  \pa_t u - \diver(A(u)\na u) = f(u), \quad f(u)=(f_1(u),\ldots,f_n(u)).
$$


\subsection*{State of the art}

From a mathematical viewpoint, the analysis of \eqref{1.eq}-\eqref{1.bic} is
highly nontrivial since the diffusion matrix $A(u)$ is neither symmetric nor
generally positive definite. Although the maximum principle may be applied
to prove the nonnegativity of the densities, it is generally not possible
to show upper bounds. Moreover, there
is no general regularity theory for diffusion systems, which makes the analysis
very delicate. Equations \eqref{1.eq} can be written in the form
\begin{equation}\label{1.Delta}
  \pa_t u_i - \Delta(u_ip_i(u)) = f_i(u),
\end{equation}
which allows for the proof of an $L^{2+s}$ estimate by the duality method 
\cite{DeFe15,Pie10},
but we will not exploit this method in the paper.

The case of $n=2$ species and linear transition rates $s=1$
corresponds to the original population model
of Shigesada, Kawasaki, and Teramota \cite{SKT79},
\begin{equation}\label{1.skt}
\begin{aligned}
  \pa_t u_1 - \Delta\big(u_1(a_{10} + a_{11}u_1 + a_{12}u_2)\big) &= f_1(u), \\
  \pa_t u_2 - \Delta\big(u_2(a_{20} + a_{21}u_1 + a_{22}u_2)\big) &= f_2(u).
\end{aligned}
\end{equation}
The numbers $a_{i0}$ are the diffusion coefficients, $a_{ii}$ are the self-diffusion
coefficients, and $a_{ij}$ for $i\neq j$ are called the cross-diffusion coefficients.
This model attracted a lot of attention in the mathematical literature.
The first global existence result is due to Kim \cite{Kim84} who 
studied the equations in one space dimension, neglected
self-diffusion, and assumed equal coefficients ($a_{ij}=1$). His result was
extended to higher space dimensions in \cite{Deu87}.  
Most of the papers made restrictive structural assumptions, for instance supposing
that the diffusion matrix is triangular ($a_{21}=0$), 
since this allows for the maximum 
principle in the second equation \cite{Ama89,Le02,LNW98}. Another restriction
is to suppose that the cross-diffusion coefficients
are small, since in this situation the diffusion matrix becomes positive definite
\cite{Deu87,Yag93}.

Significant progress was made by Amann \cite{Ama89} who showed that
a priori estimates in the $W^{1,p}$ norm with $p>d$ are sufficient for
the solutions to general quasilinear parabolic systems to
exist globally in time, and he applied his result to the triangular case.
The first global existence result without any restriction on the diffusion
coefficients (except positivity) was achieved in \cite{GGJ03} in one space
dimension and in \cite{ChJu04,ChJu06} in several space dimensions. 
The results were extended to the whole space in \cite{Dre08}.
The existence of global classical solutions was proved in, e.g., \cite{Le06},
under suitable conditions on the coefficients.

Nonlinear transition rates, but still for two species, were analyzed
by Desvillettes and co-workers, assuming sublinear ($0<s<1$) \cite{DLM14} or 
superlinear rates ($s>1$) and the weak cross-diffusion condition
$((s-1)/(s+1))^2a_{12}a_{21} \le a_{11}a_{22}$ \cite{DLMT15}.
Similar results, but under a slightly stronger weak cross-diffusion hypothesis, were
proved in \cite{Jue15}.

As already mentioned, there are very few results for more than two species.
The existence of positive stationary solutions and
the stability of the constant equilibrium was investigated in \cite{AnTe14,RyAh05}.
The existence of global weak solutions in one space dimension
assuming a positive definite diffusion matrix was proved in \cite{WeFu09}, 
based on Amann's results. Using an entropy approach, the global existence of solutions
was shown in \cite{DLMT15} for three species under the condition $0<s<1/\sqrt{3}$
(which guarantees that $\det(A(u))>0$).
To our knowledge, a global existence theorem under more general conditions
seems to be not available in the literature. In this paper, we prove such
a result and relate a structural condition on the coefficients $a_{ij}$ with
Onsager's principle of thermodynamics.


\subsection*{Key ideas}

Before we state the main results, let us explain our strategy.
The idea is to find a priori estimates by employing
a Lyapunov functional approach with
\begin{equation}\label{1.ent}
  \HH[u] = \int_\Omega h(u)dx = \int_\Omega \sum_{i=1}^n \pi_i h_s(u_i)dx,
\end{equation}
where $\pi_i>0$ are some numbers and
\begin{equation}\label{1.h}
	h_s(z) = \left\{\begin{array}{ll}
	z(\log z-1) + 1 &\mbox{for } s=1, \\
	\displaystyle\frac{z^s-sz}{s-1} + 1 &\mbox{for } s\neq 1.
	\end{array}\right.
\end{equation}
Because of the connection of our method to nonequilibrium thermodynamics
\cite[Section 4.3]{Jue16}, we refer to $\HH[u]$ as an entropy and to $h(u)$ 
as an entropy density. Introducing the so-called entropy variable
$w=(w_1,\ldots,w_n)$ (called chemical potential in thermodynamics) by
$$
  w_i = \frac{\pa h}{\pa u_i}(u) = \left\{\begin{array}{ll}
	\pi_i\log u_i &\mbox{for }s=1, \\
	\displaystyle\frac{s\pi_i}{s-1}(u_i^{s-1}-1) &\mbox{for }s\neq 1,
	\end{array}\right.
$$
equations \eqref{1.eq} can be written as
\begin{equation}\label{1.B}
  \pa_t u(w) - \diver(B(w)\na w) = f(u(w)), \quad B(w) = A(u)H(u)^{-1},
\end{equation}
where $u(w):=(h')^{-1}(w)$ is the inverse transformation 
and $H(u)=h''(u)$ is the Hessian of the entropy density. 
We claim that if $f=0$ and $B(w)$
or, equivalently, $H(u)A(u)$ is positive semi-definite\footnote{We say that an 
arbitrary matrix $M\in\R^{n\times n}$ is positive (semi-) definite if 
$z^\top Mz>$ ($\ge$) 0 for all $z\in\R^n$, $z\neq 0$.}, 
$\HH[u]$ is a
Lyapunov functional along solutions to \eqref{1.eq}. Indeed, a (formal) computation
shows that
$$
  \frac{d}{dt}\HH[u] = -\int_\Omega\na w:B(w)\na w dx \le 0,
$$
which implies that $t\mapsto\HH[u(t)]$ is nonincreasing. The entropy method
provides more than just the monotonicity of $\HH[u]$. If, for instance, 
$z^\top H(u)A(u)z \ge \sum_{i=1}^n c_i u_i^{\alpha-2} z_i^2$ 
for some constants $\alpha>0$, $c_i>0$, it follows that
$$
  \frac{d}{dt}\HH[u] 
	+ \frac{4}{\alpha^2}\int_\Omega\sum_{i=1}^n c_i|\na u_i^{\alpha/2}|^2 dx \le 0,
$$
which yields gradient estimates for $u_i^{\alpha/2}$.
This strategy was employed in many papers on cross-diffusion systems; see, e.g.,
\cite{ChJu04,ChJu06,DLM14,Dre08,GGJ03,Jue15,ZaJu16}. In this paper, 
we introduce two new ideas which we explain for the case $s=1$ 
($s\neq 1$ is studied below).

It is known that the entropy \eqref{1.ent} with $\pi_i=1$ is a Lyapunov functional
for the two-species model \eqref{1.skt} with $f_1=f_2=0$. 
This property is generally not
satisfied for the corresponding $n$-species system. 
Our {\em first idea} is to introduce
the numbers $\pi=(\pi_1,\ldots,\pi_n)$ in the entropy \eqref{1.ent}. 
It turns out that \eqref{1.ent} is a Lyapunov functional and
$H(u)A(u)$ is symmetric and positive definite if 
\begin{equation}\label{1.db}
  \pi_i a_{ij} = \pi_j a_{ji} \quad\mbox{for all }i,j=1,\ldots,n.
\end{equation}
More precisely, this property is {\em equivalent} to the symmetry of $H(u)A(u)$ 
(see Proposition \ref{prop.db}). We recognize \eqref{1.db} as the detailed balance
condition for the Markov chain associated to $(a_{ij})$. The equivalence of
the symmetry and the detailed balance condition is new but not surprising. 
In fact, the latter condition means that $\pi$ is a reversible
measure, and time-reversibility of a thermodynamic system is equivalent
to the symmetry of the so-called Onsager matrix $B(w)$, so symmetry and
reversibility are related both from a mathematical and physical viewpoint. 
We detail these relations in Section \ref{sec.db}. In Section \ref{sec.sublin},
we derive a refined estimate for $H(u)A(u)$ leading to
\begin{equation}\label{1.ei}
  \frac{d}{dt}\HH[u] + 4\int_\Omega\sum_{i=1}^n \pi_i a_{i0}|\na\sqrt{u_i}|^2 dx
	+ 2\int_\Omega\sum_{i=1}^n\pi_i a_{ii}|\na u_i|^2 dx \le 0,
\end{equation}
and thus giving an $H^1$ estimate for $\sqrt{u_i}$ (if $a_{i0}>0$) and
$u_i$ (if $a_{ii}>0$). This is the key estimate for the global existence 
result. (Below we also take into account the reaction terms \eqref{1.f1}.)

One may ask whether the detailed balance condition is necessary for the
monotonicity of the entropy. It is not. We show that if self-diffusion 
dominates cross-diffusion in the sense 
\begin{equation}\label{1.eta0}
  \eta_0 := \min_{i=1,\ldots,n}\bigg(a_{ii} - \frac{s}{2(s+1)}\sum_{j=1}^n
	\big(\sqrt{a_{ij}} - \sqrt{a_{ji}}\big)^2\bigg) > 0,
\end{equation}
and detailed balance may be not satisfied,
then the estimate leading to \eqref{1.ei} still holds (with different constants), 
and global existence follows. (Throughout this paper, we set $\pi_i=1$ when
detailed balance does not hold.)
However, if conditions \eqref{1.db} or \eqref{1.eta0} are both not satisfied, 
there exist coefficients
$a_{ij}$ and initial data $u^0$ such that $t\mapsto \HH[u(t)]$ is increasing
on $[0,t_0]$ for some $t_0>0$; 
see Section \ref{sec.incr}. Numerical
experiments (not shown) indicate that after the initial increase, the entropy 
decays and, in fact, it stays bounded for all time. We conjecture that
the entropy is bounded for all time for all nonnegative coefficients and 
nonnegative initial data
and that global existence of weak solutions holds for any (positive)
coefficients $a_{ij}$.

Our results can be extended to nonlinear transition rates of type \eqref{1.A}.
One may choose more general terms $a_{ij}u_j^{s_j}$ with different exponents
$s_j$ but the results are easier to formulate if all exponents are equal.  
Coefficients with exponents $s\neq 1$ were also considered in 
\cite{DLM14,DLMT15,Jue15} 
but in the two-species case only. We generalize these results to
the multi-species case for any $n\ge 2$.
The entropy method has to be adapted since the inverse of 
$h_s'(z)=(s/(s-1))(z^{s-1}-1)$ cannot be defined on $\R$ and thus,
$u(w)=(h')^{-1}(w)$ is not defined for all $w\in\R^n$. This issue can be overcome
by regularization as in \cite{DLM14,Jue15}. In fact, we introduce
$$
  h_\eps(u) = h(u) + \eps\sum_{i=1}^n\big(u_i(\log u_i-1)+1\big).
$$
Then $h_\eps':(0,\infty)^n\to\R^n$ can be inverted and 
$(h_\eps')^{-1}:\R^n\to(0,\infty)^n$ is defined on $\R^n$.
As a consequence, $u_i=(h_\eps')^{-1}(w)_i$ is positive for any $w\in\R^n$ and
even strongly positive if $w$ varies in a compact subset of $\R^n$.

Unfortunately, the product $H_\eps(u)A(u)$, where $H_\eps(u)=h_\eps''(u)$, 
is generally not positive definite
and we need to approximate $A(u)$. In contrast to the approximations suggested
in \cite{DLM14,Jue15}, we employ a {\em non-diagonal} matrix; see \eqref{2.Aeps} 
below. More specifically, we introduce $A_\eps(u)=A(u)+\eps A^0(u)+\eps^\eta A^1(u)$ 
with non-diagonal $A^0(u)$, diagonal $A^1(u)$, and $\eta\le 1/2$ such that
$$
  z^\top H_\eps(u)A_\eps(u)z \ge z^\top H(u)A(u)z \quad\mbox{for all }z\in\R^n.
$$
The choice of the non-diagonal approximation satisfying this inequality is
nontrivial, and this construction is our {\em second idea}. 


\subsection*{Main results}

First, we show that global existence of weak solutions holds for linear
transition rates ($s=1$). In the following, we set $Q_T=\Omega\times(0,T)$.

\begin{theorem}[Global existence for linear transition rates]\label{thm.ex1}
Let $T>0$, $s=1$ and $u^0=(u_1^0,\ldots,u_n^0)$ be 
such that $u_i^0\ge 0$ for $i=1,\ldots,n$ and
$\int_\Omega h(u^0)dx<\infty$. Let either detailed balance and $a_{ii}>0$ for
$i=1,\ldots,n$; or \eqref{1.eta0} hold.
Then there exists a weak solution $u=(u_1,\ldots,u_n)$ to \eqref{1.eq}-\eqref{1.bic} 
satisfying $u_i\ge 0$ in $\Omega$, $t>0$, and
\begin{align*}
  & u_i\in L^2(0,T;H^1(\Omega)), \quad
	u_i\in L^\infty(0,T;L^1(\Omega)), \\
	& u_i\in L^{2+2/d}(Q_T), \quad
	\pa_t u_i\in L^{q'}(0,T;W^{1,q}(\Omega)'), \quad i=1,\ldots,n,
\end{align*}
where $q=2(d+1)$ and $q'=(2d+2)/(2d+1)$. 
The solution $u$ solves \eqref{1.eq} in the weak sense
\begin{equation}\label{1.weak1}
  \int_0^T\langle\pa_t u,\phi\rangle dt + \int_0^T\int_\Omega\na\phi:A(u)\na u dxdt
	= \int_0^T\int_\Omega f(u)\cdot\phi dxdt
\end{equation}
for all test functions $\phi\in L^q(0,T;W^{1,q}(\Omega))$, 
and the initial condition in \eqref{1.bic} is satisfied in the sense of
$W^{1,q}(\Omega)'$.
\end{theorem}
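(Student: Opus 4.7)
The strategy is the entropy method combined with a time-discretization and a higher-order elliptic regularization in the entropy variable, in the spirit of \cite{Jue15}. The starting point is the reformulation \eqref{1.B} in terms of $w=h'(u)$; for $s=1$ this gives $w_i=\pi_i\log u_i$, so the inverse $u_i(w)=\exp(w_i/\pi_i)$ is a smooth bijection of $\R^n$ onto $(0,\infty)^n$ and the auxiliary regularization $h_\eps$ from the introduction is not needed here. Under either hypothesis of the theorem, Proposition \ref{prop.db} (the detailed balance case) or the calculation leading to \eqref{1.eta0} yields the pointwise coercivity estimate \eqref{1.ei} for $H(u)A(u)$. This coercivity is the only structural ingredient beyond the general scheme.

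First I would fix $\tau=T/N$, $\eps>0$, and an integer $m>d/2$ so that $H^m(\Omega)\hookrightarrow L^\infty(\Omega)$. Given $u^{k-1}$, I would seek $w^k\in H^m(\Omega;\R^n)$ satisfying
\[
\frac{1}{\tau}\int_\Omega (u(w^k)-u^{k-1})\cdot\phi\,dx + \int_\Omega \na\phi:B(w^k)\na w^k\,dx + \eps R_m(w^k,\phi) = \int_\Omega f(u(w^k))\cdot\phi\,dx
\]
for all $\phi\in H^m(\Omega;\R^n)$, where $R_m(w,\phi)=\sum_{|\alpha|\le m}\int_\Omega D^\alpha w\cdot D^\alpha\phi\,dx$. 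Existence of $w^k$ follows from the Leray--Schauder theorem applied to the linearized map obtained by freezing $u$ in $B$ and $f$; the necessary a priori $H^m$-bound is provided by testing with $\phi=w^k$, using the convexity inequality $(u(w^k)-u^{k-1})\cdot w^k\ge h(u^k)-h(u^{k-1})$, the coercivity \eqref{1.ei}, and the Lotka--Volterra sign structure.

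Second, I would derive uniform a priori estimates. Testing the time-discrete equation with $\phi=w^k$, bounding the diffusive term from below via \eqref{1.ei}, handling the reaction contribution by a discrete Gronwall argument using the growth of $f$, and summing over $k$ produces uniform-in-$(\tau,\eps)$ bounds
\[
\HH[u^{(\tau)}(t)] + \int_0^T\!\!\int_\Omega \sum_{i=1}^n\Big(4\pi_i a_{i0}|\na\sqrt{u_i^{(\tau)}}|^2 + 2\pi_i a_{ii}|\na u_i^{(\tau)}|^2\Big)\,dx\,dt + \eps\|w^{(\tau)}\|_{L^2(0,T;H^m)}^2 \le C,
\]
where $u^{(\tau)}$ denotes the piecewise-constant time interpolant. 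This gives $u_i^{(\tau)}$ bounded in $L^2(0,T;H^1(\Omega))\cap L^\infty(0,T;L^1(\Omega))$, hence in $L^{2+2/d}(Q_T)$ by Gagliardo--Nirenberg interpolation, and through the equation a uniform bound on $\pa_t u^{(\tau)}$ in $L^{q'}(0,T;W^{1,q}(\Omega)')$ with $q=2(d+1)$. The Aubin--Lions lemma then yields strong convergence of $u^{(\tau)}$ in $L^2(Q_T)$ along a subsequence as $(\tau,\eps)\to 0$, while the $\eps$-regularization vanishes in the weak formulation because $\eps R_m(w^{(\tau)},\phi)\le\eps^{1/2}(\eps\|w^{(\tau)}\|_{H^m}^2)^{1/2}\|\phi\|_{H^m}$ tends to zero. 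Continuity of $A$ and $f$ then allows identification of the limit in \eqref{1.weak1}, and the initial datum is recovered from the time-derivative bound.

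The hard part will be the compactness needed to pass to the limit in the non-monotone cross-diffusion term. Since $A(u)$ is polynomial in $u$ and only weak $L^2$-convergence of $\na u^{(\tau)}$ is available, the identification of the limit of $A(u^{(\tau)})\na u^{(\tau)}$ requires strong $L^2$-convergence of $u^{(\tau)}$ itself, which is produced solely by the $H^1$-spatial regularity in \eqref{1.ei}; this is precisely why the theorem splits into the two structural hypotheses. A secondary technical obstacle is that $B(w)=A(u)H(u)^{-1}$ fails to be uniformly elliptic in $w\in\R^n$ (it degenerates as $u_i\to 0$, i.e. $w_i\to-\infty$), so the $H^m$-regularization is indispensable to close the fixed-point step, yet it must be linear and of higher order so as to vanish in the limit. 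Finally, controlling the reaction contribution $\int f(u)\cdot w\,dx$ when testing with $w$ relies on the specific sign and growth structure of \eqref{1.f1}, without which the Gronwall loop in the entropy inequality would not close.
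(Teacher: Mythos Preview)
Your proposal is correct and matches the paper's proof (Section~\ref{sec.lin}) essentially step for step: the same implicit-Euler/$H^m$-regularized scheme in the entropy variable, the same coercivity input from Lemmas~\ref{lem.db} and~\ref{lem.ndb1} (your citation of Proposition~\ref{prop.db} should be Lemma~\ref{lem.db}), the same Gagliardo--Nirenberg step to $L^{2+2/d}$, and the same Aubin--Lions compactness. One small slip: at the approximate level the discrete time derivative can only be bounded in $L^{q'}(0,T;W^{m,q}(\Omega)')$, not in $L^{q'}(0,T;W^{1,q}(\Omega)')$, because the $\eps$-regularization term $\eps R_m(w^{(\tau)},\phi)$ forces test functions in $H^m$; the upgrade to the $W^{1,q}$-dual happens only \emph{after} the limit $\eps\to 0$, by noting that $\pa_t u=\diver(A(u)\na u)+f(u)\in L^{q'}(0,T;W^{1,q}(\Omega)')$ and invoking density.
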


The theorem can be generalized to the case of vanishing self-diffusion, i.e.\
$a_{ii}=0$ if detailed balance, $a_{i0}>0$, and $b_{ii}>0$ hold; see
Remark \ref{rem.ex1}.

Our second result is concerned with nonlinear transition rates ($s\neq 1$).
The entropy inequality yields the regularity 
$u_i\in L^{2s+2/d}(Q_T)$ which may not include $L^2$
for ``small'' exponents $s<1$ and large dimensions $d$.
For this reason, we need to suppose, in the sublinear case, 
the lower bound $s>1-2/d$ and a weaker growth of the Lotka-Volterra terms:
\begin{equation}\label{1.f2}
  f_i(u) = u_i\bigg(b_{i0} - \sum_{j=1}^n b_{ij}u_j^\sigma\bigg), \quad i=1,\ldots,n,
	\quad 0\le \sigma < 2s-1+2/d.
\end{equation}

The superlinear case ($s>1$) is somehow easier than the sublinear one since
the entropy inequality gives the higher regularity $u_i\in L^p(Q_T)$ with $p>2$.
On the other hand, we need a weak cross-diffusion constraint. 
More precisely, if detailed balance holds, we require that
\begin{equation}
	\eta_1 := \min_{i=1,\ldots,n}\bigg(a_{ii} - \frac{s-1}{s+1}\sum_{j=1,\,j\neq i}^n 
	a_{ij}\bigg) > 0, \label{1.eta1}
\end{equation}
and if detailed balance does not hold, we suppose that
\begin{equation}
	\eta_2 := \min_{i=1,\ldots,n}\bigg(a_{ii} - \frac{1}{2(s+1)}\sum_{j=1,\,j\neq i}
	\big(s(a_{ij}+a_{ji}) - 2\sqrt{a_{ij}a_{ji}}\big)\bigg) > 0. \label{1.eta2}
\end{equation}
For $m\ge 2$ and $1\le q\le\infty$ we introduce the space
\begin{equation}\label{1.wmq}
  W^{m,q}_\nu(\Omega)=\{\phi\in W^{m,q}(\Omega):\na\phi\cdot\nu=0
	\mbox{ on }\pa\Omega\}.
\end{equation}

\begin{theorem}[Global existence for nonlinear transition rates]\label{thm.ex2}
Let $T>0$, $s > \max\{0,1-2/d\}$, and $u^0$ be such that $u_i^0\ge 0$ for 
$i=1,\ldots,n$ and $\int_\Omega h(u^0)dx<\infty$.
If $s<1$, we suppose that \eqref{1.f2} and either detailed balance and $a_{ii}>0$ for
$i=1,\ldots,n$; or \eqref{1.eta0} hold.
If $s>1$, we suppose that \eqref{1.f1} and 
either detailed balance and \eqref{1.eta1} or \eqref{1.eta2} hold.
Then there exist a number $2\le q<\infty$ and a weak solution $u=(u_1,\ldots,u_n)$ to 
\eqref{1.eq}-\eqref{1.bic} satisfying $u_i\ge 0$ in $\Omega$, $t>0$, and
\begin{align*}
  & u_i^{s}\in L^2(0,T;H^1(\Omega)), \quad
	u_i\in L^\infty(0,T;L^{\max{\{1,s\}}}(\Omega)), \\
	& u_i\in L^{p(s)}(Q_T), \quad
	\pa_t u_i\in L^{q'}(0,T;W^{m,q}_\nu(\Omega)'), \quad i=1,\ldots,n,
\end{align*}
where $p(s)=2s+(2/d)\max\{1,s\}$, $1/q+1/q'=1$, and $m>\max\{1,d/2\}$. 
The solution $u$ solves \eqref{1.eq} in the ``very weak'' sense
\begin{equation}\label{1.weak2}
  \int_0^T\langle\pa_t u,\phi\rangle dt - \int_0^T\int_\Omega\sum_{i=1}^n
	u_ip_i(u)\Delta\phi_i dxdt = \int_0^T\int_\Omega f(u)\cdot\phi dxdt
\end{equation}
for all $\phi=(\phi_1,\ldots,\phi_n)\in L^q(0,T;W^{m,q}_\nu(\Omega))$, 
and the initial condition holds in the sense of $W^{m,q}_\nu(\Omega)'$.
\end{theorem}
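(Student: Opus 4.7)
The strategy is to follow the entropy-variable approximation scheme outlined in the introduction, treating the sublinear ($s<1$) and superlinear ($s>1$) regimes in parallel. Fix parameters $\eps,\tau>0$ and introduce the regularized entropy density
\[
  h_\eps(u)=h(u)+\eps\sum_{i=1}^n\big(u_i(\log u_i-1)+1\big)
\]
together with the approximated diffusion matrix $A_\eps(u)=A(u)+\eps A^0(u)+\eps^\eta A^1(u)$, where $A^0$ is a non-diagonal matrix and $A^1$ is diagonal, constructed so that $z^\top H_\eps(u)A_\eps(u)z\ge z^\top H(u)A(u)z$ for every $z\in\R^n$. Passing to the entropy variable $w=h_\eps'(u)$ and time-discretizing by implicit Euler with step $\tau$, I look for $w^k\in H^m(\Omega;\R^n)$ with $m>\max\{1,d/2\}$ solving
\[
  \frac{1}{\tau}\int_\Omega (u(w^k)-u^{k-1})\cdot\phi\,dx + \int_\Omega \na\phi:B_\eps(w^k)\na w^k\,dx + \eps\sum_{|\alpha|\le m}\int_\Omega D^\alpha w^k\cdot D^\alpha\phi\,dx = \int_\Omega f(u(w^k))\cdot\phi\,dx
\]
for all $\phi\in H^m(\Omega;\R^n)$, with $B_\eps=A_\eps H_\eps^{-1}$. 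The higher-order regularization together with $H^m\hookrightarrow L^\infty$ ensures $u(w^k)=(h_\eps')^{-1}(w^k)$ is strictly positive and bounded, so existence of $w^k$ follows from the Leray--Schauder fixed point theorem, with the a priori bound obtained by testing with $\phi=w^k$.

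The second step is the uniform entropy estimate. Testing with $w^k$ and using convexity of $h_\eps$ yields
\[
  \int_\Omega h_\eps(u^k)\,dx + \tau\int_\Omega \na w^k: B_\eps(w^k)\na w^k\,dx + \eps\tau\|w^k\|_{H^m}^2 \le \int_\Omega h_\eps(u^{k-1})\,dx + \tau\int_\Omega f(u^k)\cdot w^k\,dx.
\]
The key pointwise inequality is a lower bound of the form $z^\top H(u)A(u)z\ge C\sum_i u_i^{2s-2}z_i^2$ plus lower-order positive contributions, which translates into an $L^2(0,T;H^1(\Omega))$ bound for $u_i^s$. Under detailed balance this follows from the symmetry of $H(u)A(u)$ (Proposition \ref{prop.db}) combined with either $a_{ii}>0$ (for $s<1$) or \eqref{1.eta1} (for $s>1$); without detailed balance, the anti-symmetric part is absorbed using \eqref{1.eta0} (for $s<1$) or \eqref{1.eta2} (for $s>1$). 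The reaction term is handled by Young's inequality; in the sublinear case the combination of the entropy bound with Gagliardo--Nirenberg interpolation produces $u_i\in L^{p(s)}(Q_T)$ with $p(s)=2s+(2/d)\max\{1,s\}$, and the hypotheses $s>1-2/d$ and $\sigma<2s-1+2/d$ in \eqref{1.f2} are precisely what is needed so that $\int f_i(u)w_i\,dx$ can be absorbed into the gradient term.

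To pass to the limit I consider the piecewise-constant-in-time interpolants $u^{(\tau)}$ and use the discrete weak formulation to bound the discrete time derivative in $L^{q'}(0,T;W^{m,q}_\nu(\Omega)')$ for a suitable $q\ge 2$. The Aubin--Lions compactness lemma, in the variant adapted to time derivatives in the dual of a higher-order space, yields strong convergence of a subsequence of $u^{(\tau)}$ in $L^r(Q_T)$ for some $r<p(s)$, which is enough to pass to the limit in $u_ip_i(u)$. Rewriting \eqref{1.eq} as \eqref{1.Delta} and integrating by parts twice produces the ``very weak'' formulation \eqref{1.weak2}; the higher-order regularization disappears because $\eps^{1/2}\|w^{(\tau)}\|_{L^2(H^m)}$ is bounded and $\phi$ is smooth, while the corrections $\eps A^0+\eps^\eta A^1$ vanish since the associated gradient norms remain under control. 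Weak-$\ast$ compactness delivers the remaining function-space regularity stated in the theorem, and the initial condition is recovered in $W^{m,q}_\nu(\Omega)'$.

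The hard part is the construction of the non-diagonal approximation $A^0(u)$ together with the verification of $z^\top H_\eps(u)A_\eps(u)z\ge z^\top H(u)A(u)z$; the diagonal approximations used in \cite{DLM14,Jue15} cannot absorb the off-diagonal cross terms of $H(u)A(u)$ that appear for $n\ge 3$, so a genuinely new algebraic identity is required. A subsidiary difficulty, specific to the sublinear case, is that the a priori $L^{p(s)}$ bound may fail to reach $L^2$ in large dimensions, which is the structural reason for the hypothesis $s>1-2/d$ and the weakened Lotka--Volterra growth \eqref{1.f2}. Once these two ingredients are in place, the remainder of the argument is a rather standard application of the entropy method.
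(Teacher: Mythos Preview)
Your proposal is correct and follows essentially the same route as the paper: regularize the entropy and the diffusion matrix, time-discretize with an $H^m$ penalization, derive the discrete entropy inequality, upgrade to $L^{p(s)}$ via Gagliardo--Nirenberg, bound the discrete time derivative, and pass to the limit in the very weak formulation. Two small corrections: the reaction contribution $f(u)\cdot w$ is controlled pointwise by $C_f(1+h_\eps(u))$ and handled by discrete Gronwall rather than by absorption into the gradient term---the hypotheses $s>1-2/d$ and $\sigma<2s-1+2/d$ enter later, in the time-derivative estimate, to guarantee $u_ip_i(u)\in L^{q'}$ and $f(u)\in L^{q'}$ with $q'>1$; and the compactness step requires a genuinely \emph{nonlinear} Aubin--Lions lemma (the spatial bound is on $(u^{(\tau)})^s$ while the time regularity is on $u^{(\tau)}$), which for $s<1/2$ the paper proves as a separate auxiliary result.
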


In the superlinear case, it can be shown that the solution satisfies \eqref{1.eq}
in the weak sense \eqref{1.weak1}; see Remark \ref{rem.weak}.
Moreover, for any $s> \max\{0,1-2/d\}$,
it is sufficient to consider test functions from 
$L^\beta(0,T;W^{2,\beta}_\nu(\Omega))$ with $1/\beta+1/p(s)=1$, and the initial
condition holds in the sense of $W^{2,\beta}_\nu(\Omega)'$.
We can generalize the theorem to the case of vanishing self-diffusion if
either $s>\max\{1,d/2\}$; or $0<s<1$, $d=1$, and $\sigma<s+1$ hold; 
see Remark \ref{rem.aii}.

The lower bound $s>1-2/d$ can be avoided if the regularity
$u_i\in L^{2+s}(Q_T)$ holds, which is expected to follow from the duality method 
\cite{DeFe15,Pie10}. 
Unfortunately, this method is not compatible with our
approximation scheme (see \eqref{3.tau} below). This issue can possibly be overcome 
by employing the scheme proposed in \cite{DLMT15} which is specialized to
diffusion systems like \eqref{1.Delta}. In this paper, however, we prefer to employ
scheme \eqref{3.tau}.

The paper is organized as follows. Section \ref{sec.B} is concerned with
the positive definiteness of the matrices $H(u)A(u)$ and $H_\eps(u)A_\eps(u)$.
The existence theorems are proved in Sections \ref{sec.lin} and \ref{sec.non},
respectively. In the final Section \ref{sec.aux}, 
we detail the connection between the detailed balance condition and the
symmetry of $H(u)A(u)$, prove a nonlinear Aubin-Lions compactness lemma needed
in the proof of Theorem \ref{thm.ex2},
and show that the entropy may be increasing initially for special initial data.


\section{Positive definiteness of the mobility matrix}\label{sec.B}

We derive sufficient conditions for the positive definiteness of the matrix
$H(u)A(u)$. Let $\R_+=(0,\infty)$. Recall that 
$$
  A_{ij}(u) = \delta_{ij}\bigg(a_{i0} + \sum_{k=1}^na_{ik}u_k^s\bigg)
	+ sa_{ij}u_iu_j^{s-1}, \quad H_{ij}(u) = \delta_{ij}s\pi_iu_i^{s-2}.
$$
The following result is valid for any $s>0$.

\begin{lemma}\label{lem.HA.1}
Let $s>0$. Then, for any $z\in\R^n$ and $u\in\R_+^n$, 
\begin{align}
  z^\top H(u)A(u)z &\ge s\sum_{i=1}^n \pi_i a_{i0} u_i^{s-2}z_i^2
	+ s(1-s)\sum_{i,j=1,\,i\neq j}^n\pi_i a_{ij}u_j^s u_i^{s-2} z_i^2 \nonumber \\
	&\phantom{xx}{}+ s\sum_{i=1}^n\bigg((s+1)\pi_i a_{ii} - \frac{s}{2}\sum_{j=1}^n
	\big(\sqrt{\pi_i a_{ij}} - \sqrt{\pi_j a_{ji}}\big)^2\bigg)
	u_i^{2(s-1)}z_i^2. \label{2.HA}
\end{align}
\end{lemma}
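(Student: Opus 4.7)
The plan is to compute $z^\top H(u)A(u)z$ explicitly, isolate the single indefinite contribution (the cross term $s^2\sum_{i\neq j}\pi_i a_{ij}u_i^{s-1}u_j^{s-1}z_iz_j$), and control it pair by pair via a sum-of-squares identity.

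Since $H(u)$ is diagonal with entries $s\pi_iu_i^{s-2}$, a direct expansion together with the splitting $p_i(u)=a_{i0}+a_{ii}u_i^s+\sum_{k\neq i}a_{ik}u_k^s$ gives
\begin{align*}
z^\top H(u)A(u)z &= s\sum_i\pi_i a_{i0}u_i^{s-2}z_i^2+s(s+1)\sum_i\pi_i a_{ii}u_i^{2(s-1)}z_i^2 \\
&\quad{}+s\sum_{i\neq j}\pi_i a_{ij}u_j^s u_i^{s-2}z_i^2+s^2\sum_{i\neq j}\pi_i a_{ij}u_i^{s-1}u_j^{s-1}z_iz_j.
\end{align*}
Comparing with the targeted bound in \eqref{2.HA} and dividing by $s^2$, the claim reduces to showing that
\[
T := \sum_{i\neq j}\pi_i a_{ij}u_j^s u_i^{s-2}z_i^2+\sum_{i\neq j}\pi_i a_{ij}u_i^{s-1}u_j^{s-1}z_iz_j+\tfrac12\sum_{i\neq j}\big(\sqrt{\pi_i a_{ij}}-\sqrt{\pi_j a_{ji}}\big)^2 u_i^{2(s-1)}z_i^2
\]
is nonnegative (the $j=i$ contribution in the third sum of the target vanishes, so restricting to $i\neq j$ is harmless).

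I would then reorganize $T$ as a sum over unordered pairs $\{i,j\}$. With the abbreviations $\alpha=\pi_i a_{ij}$, $\beta=\pi_j a_{ji}$, $X=u_i^{s-1}z_i$, $Y=u_j^{s-1}z_j$, each pair contributes
\[
\alpha\, u_j^s u_i^{s-2}z_i^2+\beta\, u_i^s u_j^{s-2}z_j^2+(\alpha+\beta)XY+\tfrac12(\sqrt\alpha-\sqrt\beta)^2(X^2+Y^2).
\]
The crucial observation is the splitting $\alpha+\beta=2\sqrt{\alpha\beta}+(\sqrt\alpha-\sqrt\beta)^2$: the $2\sqrt{\alpha\beta}\,XY$ piece combines with $\alpha u_j^s u_i^{s-2}z_i^2+\beta u_i^s u_j^{s-2}z_j^2$ into a perfect square (using $u_j^s u_i^{s-2}=\bigl(u_j^{s/2}u_i^{(s-2)/2}\bigr)^2$ and the matching mixed factor), while the $(\sqrt\alpha-\sqrt\beta)^2 XY$ piece combines with $\tfrac12(\sqrt\alpha-\sqrt\beta)^2(X^2+Y^2)$ into $\tfrac12(\sqrt\alpha-\sqrt\beta)^2(X+Y)^2$. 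Thus each pair contribution equals
\[
\Big(\sqrt\alpha\, u_j^{s/2}u_i^{(s-2)/2}z_i+\sqrt\beta\, u_i^{s/2}u_j^{(s-2)/2}z_j\Big)^2+\tfrac12(\sqrt\alpha-\sqrt\beta)^2(X+Y)^2\;\ge\; 0,
\]
and summing over pairs yields $T\ge 0$.

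The main obstacle is identifying the correct pairwise regrouping — recognising that the surplus $s^2\pi_i a_{ij}$ of the positive cross-diffusion coefficient, the indefinite cross term, and the squared-difference correction must be bundled together before the sum-of-squares structure becomes visible. Once this regrouping is found the argument is purely algebraic, valid for every $s>0$, and requires neither detailed balance nor self-diffusion dominance; those structural hypotheses enter only later, when turning this pointwise bound into a useful gradient estimate.
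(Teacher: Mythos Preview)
Your proof is correct and follows essentially the same route as the paper: expand $z^\top H(u)A(u)z$, peel off the $s(1-s)$ portion of the third sum, and handle the remainder pairwise over $\{i,j\}$. The only cosmetic difference is that the paper bounds the pair contribution by two successive applications of $b^2+c^2\ge 2bc$ (first on $\alpha u_j^su_i^{s-2}z_i^2+\beta u_i^su_j^{s-2}z_j^2$, then on $|XY|$), whereas you use the splitting $\alpha+\beta=2\sqrt{\alpha\beta}+(\sqrt\alpha-\sqrt\beta)^2$ to obtain an exact sum-of-squares identity for $T$; this is marginally sharper (it exhibits the remainder explicitly) but yields the same inequality.
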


\begin{proof}
The elements of the matrix $H(u)A(u)$ equal
\begin{align*}
  (H(u)A(u))_{ij} &= \delta_{ij}s\pi_i\bigg(a_{i0}u_i^{s-2} 
	+ \sum_{k=1}^n a_{ik}u_k^s u_i^{s-2}\bigg) + s^2 a_{ij}(u_iu_j)^{s-1} \\
	&= \delta_{ij}\big(s\pi_i a_{i0}u_i^{s-2} + s(s+1)\pi_i a_{ii}u_i^{2(s-1)}\big) \\
	&\phantom{xx}{}+ \delta_{ij}s\pi_i\sum_{k=1,\,k\neq i}^n a_{ik}u_k^s u_i^{s-2}
	+ (1-\delta_{ij})s^2\pi_i a_{ij}(u_iu_j)^{s-1}.
\end{align*}
Therefore, for $z\in\R^n$,
\begin{align}
  z^\top H(u)A(u)z &= s\sum_{i=1}^n\pi_i a_{i0}u_i^{s-2}z_i^2
	+ s(s+1)\sum_{i=1}^n\pi_i a_{ii}u_i^{2(s-1)}z_i^2 \nonumber \\
	&\phantom{xx}{}+ s\sum_{i,j=1,\,i\neq j}^n\pi_i a_{ij}u_j^su_i^{s-2}z_i^2
	+ s^2\sum_{i,j=1,\,i\neq j}^n\pi_i a_{ij}(u_iu_j)^{s-1}z_iz_j \label{2.HA2} \\
	&=: I_1 + \cdots + I_4. \nonumber
\end{align}
The sum $I_1$ is the same as the first term on the right-hand side
of \eqref{2.HA}, and $I_2$ equals the first part of the last term on this
right-hand side. The remaining terms are written as
\begin{align*}
  I_3+I_4 &= s^2\sum_{i,j=1,\,i\neq j}^n\pi_i a_{ij}u_j^su_i^{s-2}z_i^2
	+ s(1-s)\sum_{i,j=1,\,i\neq j}^n\pi_i a_{ij}u_j^su_i^{s-2}z_i^2 \\
	&\phantom{xx}{}+ s^2\sum_{i,j=1,\,i\neq j}^n\pi_i a_{ij}(u_iu_j)^{s-1}z_iz_j.
\end{align*}
The second term corresponds to the second term on the
right-hand side of \eqref{2.HA}. Thus, it remains to prove that
\begin{align*}
  J &:= s^2\sum_{i,j=1,\,i\neq j}^n\pi_i a_{ij}u_j^su_i^{s-2}z_i^2
  + s^2\sum_{i,j=1,\,i\neq j}^n\pi_i a_{ij}(u_iu_j)^{s-1}z_iz_j \\
	&\ge - \frac{s^2}{2}\sum_{j=1}^n
	\big(\sqrt{\pi_i a_{ij}} - \sqrt{\pi_j a_{ji}}\big)^2 u_i^{2(s-1)}z_i^2.
\end{align*}
For this, we employ twice the inequality $b^2+c^2\ge 2bc$:
\begin{align*}
  J &= s^2\sum_{i,j=1,\,i<j}^n\pi_i a_{ij}u_j^su_i^{s-2}z_i^2
	+ s^2\sum_{i,j=1,\,i>j}^n\pi_i a_{ij}u_j^su_i^{s-2}z_i^2 \\
  &\phantom{xx}{}+ s^2\sum_{i,j=1,\,i<j}^n\pi_i a_{ij}(u_iu_j)^{s-1}z_iz_j
	+ s^2\sum_{i,j=1,\,i>j}^n\pi_i a_{ij}(u_iu_j)^{s-1}z_iz_j \\
	&= s^2\sum_{i,j=1,\,i<j}^n\Big(\pi_i a_{ij}u_j^su_i^{s-2}z_i^2
	+ \pi_ja_{ji}u_i^su_j^{s-2}z_j^2 
	+ (\pi_ia_{ij}+\pi_ja_{ji})(u_iu_j)^{s-1}z_iz_j\Big) \\
	&\ge s^2\sum_{i,j=1,\,i<j}^n\Big(2\sqrt{\pi_ia_{ij}\pi_ja_{ji}}(u_iu_j)^{s-1}
	|z_iz_j| - (\pi_ia_{ij}+\pi_ja_{ji})(u_iu_j)^{s-1}|z_iz_j|\Big) \\
	&= -s^2\sum_{i,j=1,\,i<j}^n\big(\sqrt{\pi_ia_{ij}}-\sqrt{\pi_ja_{ji}}\big)^2
	\big|(u_i^{s-1}z_i)(u_j^{s-1}z_j)\big| \\
	&\ge -\frac{s^2}{2}\sum_{i,j=1,\,i<j}^n
	\big(\sqrt{\pi_ia_{ij}}-\sqrt{\pi_ja_{ji}}\big)^2
	\big((u_i^{s-1}z_i)^2+(u_j^{s-1}z_j)^2\big) \\
	&= -\frac{s^2}{2}\sum_{i,j=1,\,i\neq j}^n
	\big(\sqrt{\pi_ia_{ij}}-\sqrt{\pi_ja_{ji}}\big)^2(u_i^{s-1}z_i)^2.
\end{align*}
This finishes the proof.
\end{proof}


\subsection{Sublinear and linear transition rates}\label{sec.sublin}

For $s\le 1$, Lemma \ref{lem.HA.1} provides immediately the positive definiteness of
$H(u)A(u)$ if detailed balance \eqref{1.db} holds. However, we can derive
a sharper result.

\begin{lemma}[Detailed balance]\label{lem.db}
Let $0<s\le 1$ and $\pi_i a_{ij}=\pi_j a_{ji}$ for all $i\neq j$. Then,
for all $z\in\R^n$ and $u\in\R_+^n$,
\begin{align}
  z^\top H(u)A(u)z &\ge s\sum_{i=1}^n\pi_i u_i^{s-2}\big(a_{i0}+(s+1)a_{ii}u_i^s\big)
	z_i^2 \nonumber \\
	&\phantom{xx}{}+ \frac{s^2}{2}
	\sum_{i,j=1,\,i\neq j}^n \pi_i a_{ij}(u_iu_j)^{s-1}\bigg(\sqrt{\frac{u_j}{u_i}}z_i
	+ \sqrt{\frac{u_i}{u_j}}z_j\bigg)^2. \label{2.HAest}
\end{align}
\end{lemma}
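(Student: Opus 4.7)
My plan is to begin from the exact identity \eqref{2.HA2} already established in the proof of Lemma \ref{lem.HA.1}, which decomposes $z^\top H(u)A(u)z$ as $I_1+I_2+I_3+I_4$. The terms $I_1$ (the $a_{i0}$ contribution) and $I_2$ (the self-diffusion contribution $s(s+1)\pi_i a_{ii}u_i^{2(s-1)}z_i^2$) already coincide with the first sum on the right-hand side of \eqref{2.HAest}, so the entire task reduces to showing that the off-diagonal part
$$
  I_3+I_4 = s\sum_{i\neq j}\pi_i a_{ij} u_j^s u_i^{s-2} z_i^2
  + s^2\sum_{i\neq j}\pi_i a_{ij}(u_iu_j)^{s-1}z_iz_j
$$
dominates the perfect-square sum on the right-hand side of \eqref{2.HAest}.

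The next step is to expand the target square,
$$
  (u_iu_j)^{s-1}\bigg(\sqrt{u_j/u_i}\,z_i + \sqrt{u_i/u_j}\,z_j\bigg)^2
  = u_i^{s-2}u_j^s z_i^2 + 2(u_iu_j)^{s-1}z_iz_j + u_i^s u_j^{s-2} z_j^2,
$$
and to use detailed balance $\pi_i a_{ij}=\pi_j a_{ji}$ to relabel $i\leftrightarrow j$ in the last contribution, which gives
$$
  \sum_{i\neq j}\pi_i a_{ij}\, u_i^s u_j^{s-2} z_j^2
  = \sum_{i\neq j}\pi_j a_{ji}\, u_i^s u_j^{s-2} z_j^2
  = \sum_{i\neq j}\pi_i a_{ij}\, u_j^s u_i^{s-2} z_i^2.
$$
Substituting back, the perfect-square sum on the right-hand side of \eqref{2.HAest} collapses to
$$
  s^2\sum_{i\neq j}\pi_i a_{ij}\, u_j^s u_i^{s-2} z_i^2
  + s^2\sum_{i\neq j}\pi_i a_{ij}(u_iu_j)^{s-1}z_iz_j,
$$
in which the cross term is identical to that in $I_3+I_4$ and the quadratic-in-$z_i$ term differs only by a factor $s$ vs.\ $s^2$.

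The proof then closes by the single inequality $s\ge s^2$, valid for $0<s\le 1$, which yields
$$
  I_3+I_4 - \frac{s^2}{2}\sum_{i\neq j}\pi_i a_{ij}(u_iu_j)^{s-1}
  \bigg(\sqrt{u_j/u_i}\,z_i + \sqrt{u_i/u_j}\,z_j\bigg)^2
  = s(1-s)\sum_{i\neq j}\pi_i a_{ij}\, u_j^s u_i^{s-2} z_i^2 \ge 0,
$$
since $\pi_i,a_{ij}\ge 0$ and $u\in\R_+^n$. There is no serious obstacle: the mild but essential point is the symmetrization of the $u_i^s u_j^{s-2}z_j^2$ contribution via detailed balance, which is exactly what allows the cross terms to be written as a genuine sum of squares; without this symmetrization one is forced into a Cauchy-Schwarz bound as in Lemma \ref{lem.HA.1} and loses the sharper form \eqref{2.HAest}.
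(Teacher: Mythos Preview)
Your proof is correct and follows essentially the same approach as the paper: both start from the decomposition $I_1+\cdots+I_4$ of Lemma~\ref{lem.HA.1}, use the detailed balance condition to symmetrize the $z_j^2$-contribution (the paper does this by splitting $I_3$ into two halves and relabeling one, you do it by expanding the target square and relabeling), and identify the nonnegative remainder $s(1-s)\sum_{i\neq j}\pi_i a_{ij}u_j^s u_i^{s-2}z_i^2$. The only difference is presentational: the paper manipulates $I_3+I_4$ forward toward the square, while you expand the target square and subtract; the arithmetic and the role of detailed balance are identical.
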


\begin{proof}
The sum of the terms $I_1$ and $I_2$ in \eqref{2.HA2} is exactly the 
first term on the right-hand side of \eqref{2.HAest}. Using detailed balance, we
find that
\begin{align*}
  I_3 + I_4  
	&= \frac{s}{2}\sum_{i,j=1,\,i\neq j}^n 
	\pi_i a_{ij}(u_iu_j)^{s-1}\frac{u_j}{u_i}z_i^2
  + \frac{s}{2}\sum_{i,j=1,\,i\neq j}^n 
	\pi_i a_{ij}(u_ju_i)^{s-1}\frac{u_i}{u_j}z_j^2 \\
	&\phantom{xx}{}+ s^2\sum_{i,j=1,\,i\neq j}^n \pi_i a_{ij}(u_iu_j)^{s-1}z_iz_j \\
	&= \frac{s^2}{2}\sum_{i,j=1,\,i\neq j}^n 
	\pi_i a_{ij}(u_iu_j)^{s-1}\frac{u_j}{u_i}z_i^2
  + \frac{s^2}{2}\sum_{i,j=1,\,i\neq j}^n 
	\pi_i a_{ij}(u_ju_i)^{s-1}\frac{u_i}{u_j}z_j^2 \\
	&\phantom{xx}{}+ s^2\sum_{i,j=1,\,i\neq j}^n \pi_i a_{ij}(u_iu_j)^{s-1}z_iz_j 
	+ \frac{s}{2}(1-s)\sum_{i,j=1,\,i\neq j}^n 
	\pi_i a_{ij}(u_iu_j)^{s-1}\frac{u_j}{u_i}z_i^2 \\
	&\phantom{xx}{}+ \frac{s}{2}(1-s)\sum_{i,j=1,\,i\neq j}^n 
	\pi_i a_{ij}(u_ju_i)^{s-1}\frac{u_i}{u_j}z_j^2.
\end{align*}
The sum of the first three terms equal the second term on the right-hand side
of \eqref{2.HAest}, and the remaining two terms are nonnegative since $s\le 1$.
\end{proof}

\begin{remark}\rm\label{rem.uiuj}
In the existence proof, we will choose $z_i=\na u_i$ (with a slight abuse of 
notation). Then the first term in \eqref{2.HAest} gives 
an estimate for $\na u_i^{s/2}$ in $L^2$ (if $a_{i0}>0$) and the better bound 
$\na u_i^s\in L^2$ (if $a_{ii}>0$). If $a_{ii}=0$, we lose the latter regularity.
This loss can be compensated by the last term in \eqref{2.HAest} giving
$$
  (u_iu_j)^{s-1}\bigg|\sqrt{\frac{u_j}{u_i}}\na u_i 
	+ \sqrt{\frac{u_i}{u_j}}\na u_j	\bigg|^2
	= \frac{4}{s^2}|\na(u_iu_j)^{s/2}|^2, \quad i\neq j,
$$
and consequently a bound for $\na(u_iu_j)^{s/2}$ in $L^2$.
This observation is used in Remark \ref{rem.ex1}.
\qed
\end{remark}

\begin{lemma}[Non detailed balance]\label{lem.ndb1}
Let $0<s\le 1$. If
\begin{equation*}
  \eta_0 := \min_{i=1,\ldots,n}\bigg(a_{ii} - \frac{s}{2(s+1)}\sum_{j=1}^n
	\big(\sqrt{a_{ij}} - \sqrt{a_{ji}}\big)^2\bigg) \ge 0,
\end{equation*}
then $H(u)A(u)$ is positive definite. Under the slightly stronger condition
$\eta_0>0$, it holds for all $z\in\R^n$ and $u\in\R_+^n$ that
$$
  z^\top H(u)A(u)z \ge s\sum_{i=1}^n a_{i0} u_i^{s-2}z_i^2
	+ \eta_0 s(s+1)\sum_{i=1}^n u_i^{2(s-1)}z_i^2.
$$
\end{lemma}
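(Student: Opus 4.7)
The plan is to derive the bound directly from the general estimate already proved in Lemma \ref{lem.HA.1}, setting $\pi_i = 1$ throughout (which corresponds to the absence of detailed balance). After that substitution, the right-hand side of \eqref{2.HA} splits naturally into three groups: the ``boundary'' diagonal piece with coefficients $a_{i0}$, a cross piece $s(1-s) \sum_{i \ne j} a_{ij} u_j^s u_i^{s-2} z_i^2$, and a diagonal piece whose coefficient contains the squared-root differences $(\sqrt{a_{ij}} - \sqrt{a_{ji}})^2$.

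First I would note that because $0 < s \le 1$ and all $a_{ij} \ge 0$, the cross piece is manifestly nonnegative and may simply be discarded, weakening but preserving the inequality. The key algebraic manipulation is then to factor $(s+1)$ out of the remaining diagonal piece, writing
$$(s+1)\, a_{ii} - \frac{s}{2} \sum_{j=1}^n \big(\sqrt{a_{ij}} - \sqrt{a_{ji}}\big)^2 = (s+1) \bigg(a_{ii} - \frac{s}{2(s+1)} \sum_{j=1}^n \big(\sqrt{a_{ij}} - \sqrt{a_{ji}}\big)^2\bigg) \ge (s+1)\, \eta_0,$$
where the last step is exactly the definition of $\eta_0$. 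Substituting this back into the bound from Lemma \ref{lem.HA.1} yields the displayed inequality
$$z^\top H(u) A(u) z \ge s \sum_{i=1}^n a_{i0}\, u_i^{s-2} z_i^2 + \eta_0\, s(s+1) \sum_{i=1}^n u_i^{2(s-1)} z_i^2.$$

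When $\eta_0 > 0$ and $z \ne 0$, positivity of the second sum (using $u_i > 0$) gives the quantitative estimate and in particular positive definiteness. Under the weaker assumption $\eta_0 \ge 0$, the same chain of inequalities yields a nonnegative lower bound, and strict positive definiteness is recovered by keeping the $a_{i0}$ contribution together with the nonnegative cross piece that was discarded. There is essentially no obstacle beyond bookkeeping: the hard quadratic-form estimate has already been carried out in Lemma \ref{lem.HA.1}, and what remains is the straightforward recognition that the coefficient appearing there equals, up to the factor $(s+1)$, precisely the quantity $\eta_0$ introduced in the statement.
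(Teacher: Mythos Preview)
Your approach is exactly that of the paper: the paper's proof is literally the single sentence that the lemma follows from Lemma~\ref{lem.HA.1} with $\pi_i=1$, and you have simply spelled out the bookkeeping (discarding the nonnegative $s(1-s)$-term since $s\le 1$, and factoring out $(s+1)$ to identify the coefficient as $\eta_0$). Your extra remark on recovering \emph{strict} positive definiteness in the borderline case $\eta_0=0$ goes slightly beyond what the paper argues and tacitly needs an additional positivity assumption (e.g.\ $a_{i0}>0$, or $s<1$ together with some off-diagonal $a_{ij}>0$); the paper does not address this either, and only the case $\eta_0>0$ is used in the existence proofs.
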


The lemma follows from Lemma \ref{lem.HA.1} after choosing $\pi_i=1$ for
$i=1,\ldots,n$. Observe that $\eta_0>0$ holds if $a_{ii}>0$ for all $i$ 
and $(a_{ij})$ is symmetric.

It is possible to show the positive definiteness of $H(u)A(u)$
without any restriction on $(a_{ij})$ (except positivity) if we restrict the
choice of the parameter $s$; see the following lemma.

\begin{lemma}
Let $a_{ij}+a_{ji}>0$ for $i,j=1,\ldots,n$ and $0<s\le s_0$, where
$$
  s_0 := \min_{i,j=1,\ldots,n}\frac{2\sqrt{a_{ij}a_{ji}}}{a_{ij}+a_{ji}}\le 1.
$$
Then, for all $z\in\R^n$ and $u\in\R_+^n$,
$$
  z^\top H(u)A(u)z \ge s\sum_{i=1}^n a_{i0} u_i^{s-2}z_i^2
	+ s(s+1)\sum_{i=1}^n a_{ii}u_i^{2(s-1)}z_i^2.
$$
\end{lemma}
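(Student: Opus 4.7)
The plan is to set $\pi_i = 1$ for all $i$ and start from the explicit decomposition \eqref{2.HA2}, which reads
$$
  z^\top H(u)A(u)z = I_1 + I_2 + I_3 + I_4,
$$
where $I_1 = s\sum_i a_{i0}u_i^{s-2}z_i^2$, $I_2 = s(s+1)\sum_i a_{ii}u_i^{2(s-1)}z_i^2$ are exactly the two terms appearing on the right-hand side of the claimed inequality, while
$$
  I_3 + I_4 = s\sum_{i\neq j} a_{ij}u_j^s u_i^{s-2}z_i^2
   + s^2\sum_{i\neq j} a_{ij}(u_iu_j)^{s-1}z_iz_j.
$$
Hence the whole task reduces to showing $I_3 + I_4 \ge 0$ under the hypothesis $s \le s_0$.

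The natural step is to pair the indices $(i,j)$ and $(j,i)$. For each pair with $i<j$, the corresponding contribution to $I_3+I_4$ is
$$
  T_{ij} = s\,a_{ij}u_j^s u_i^{s-2}z_i^2 + s\,a_{ji}u_i^s u_j^{s-2}z_j^2
   + s^2(a_{ij}+a_{ji})(u_iu_j)^{s-1}z_iz_j.
$$
With the substitution $x = u_j^{s/2}u_i^{(s-2)/2}z_i$ and $y = u_i^{s/2}u_j^{(s-2)/2}z_j$, a direct check gives $x^2 = u_j^s u_i^{s-2}z_i^2$, $y^2 = u_i^s u_j^{s-2}z_j^2$, and $xy = (u_iu_j)^{s-1}z_iz_j$, so that
$$
  T_{ij} = s\,a_{ij}\,x^2 + s^2(a_{ij}+a_{ji})\,xy + s\,a_{ji}\,y^2.
$$

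This binary quadratic form in $(x,y)$ has nonnegative diagonal coefficients (since $a_{ij},a_{ji}\ge 0$, both strictly positive because $s_0>0$ forces $a_{ij}a_{ji}>0$), so it is nonnegative precisely when its discriminant is nonpositive:
$$
  s^4(a_{ij}+a_{ji})^2 - 4s^2 a_{ij}a_{ji} \le 0
  \quad\Longleftrightarrow\quad
  s \le \frac{2\sqrt{a_{ij}a_{ji}}}{a_{ij}+a_{ji}}.
$$
By definition of $s_0$ this inequality is guaranteed for every pair $(i,j)$, so each $T_{ij}\ge 0$, hence $I_3+I_4 \ge 0$, which completes the proof. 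Finally, the bound $s_0\le 1$ is just AM-GM applied to $a_{ij}$ and $a_{ji}$.

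There is no genuine obstacle; the only subtle point is the symmetrization step (pairing $(i,j)$ with $(j,i)$) and identifying the correct substitution $(x,y)$ that turns the cross-diffusion contribution into a textbook binary quadratic form. Once this is done, the discriminant condition delivers exactly the threshold $s_0$, so the hypothesis of the lemma is sharp for this argument.
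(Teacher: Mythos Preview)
Your proof is correct and follows essentially the same route as the paper: set $\pi_i=1$, reduce to $I_3+I_4\ge 0$, pair $(i,j)$ with $(j,i)$, and show each pairwise contribution is nonnegative. The only cosmetic difference is that the paper applies AM--GM directly (writing $a_{ij}u_j^s u_i^{s-2}z_i^2 + a_{ji}u_i^s u_j^{s-2}z_j^2 \ge 2\sqrt{a_{ij}a_{ji}}(u_iu_j)^{s-1}|z_iz_j|$ and then bounding the cross term), whereas you phrase the same inequality via the discriminant of the binary quadratic form in $(x,y)$; the two are algebraically equivalent.
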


\begin{proof}
We choose $\pi_i=1$ for $i=1,\ldots,n$.
With the notation of the proof of Lemma \ref{lem.HA.1}, we only need to show
that $I_3+I_4\ge 0$. Employing the inequality $b^2+c^2\ge 2bc$, we find that
\begin{align*}
  I_3+I_4 &= s\sum_{i,j=1,\,i<j}^n\Big(a_{ij}u_j^s u_i^{s-2}z_i^2
	+ a_{ji}u_i^s u_j^{s-2}z_j^2 + s(a_{ij}+a_{ji})(u_iu_j)^{s-1}z_iz_j\Big) \\
	&\ge s\sum_{i,j=1,\,i<j}^n\Big(2\sqrt{a_{ij}a_{ji}}(u_iu_j)^{s-1}|z_iz_j|
	- s(a_{ij}+a_{ji})(u_iu_j)^{s-1}|z_iz_j|\Big) \\
	&= s\sum_{i,j=1,\,i<j}^n(a_{ij}+a_{ji})\bigg(
	\frac{2\sqrt{a_{ij}a_{ji}}}{a_{ij}+a_{ji}} - s\bigg)(u_iu_j)^{s-1}|z_iz_j|,
\end{align*}
and this expression is nonnegative if $s\le s_0$.
\end{proof}


\subsection{Superlinear transition rates}

Again, we assume first that detailed balance holds.

\begin{lemma}[Detailed balance]
Let $s>1$ and $\pi_i a_{ij}=\pi_j a_{ji}$ for all $i\neq j$.
If
$$
  \eta_1 :=  \min_{i=1,\ldots,n}\bigg(a_{ii} - \frac{s-1}{s+1}\sum_{j=1,\,j\neq i}^n 
	a_{ij}\bigg) \ge 0,
$$ 
then $H(u)A(u)$ is positive definite. Furthermore, if $\eta_1>0$,
then, for all $z\in\R^n$ and $u\in\R_+^n$,
$$
  z^\top H(u)A(u)z \ge s\sum_{i=1}^n\pi_i a_{i0}u_i^{s-2}z_i^2
	+ \eta_1 s(s+1)\sum_{i=1}^n\pi_i u_i^{2(s-1)}z_i^2.
$$
\end{lemma}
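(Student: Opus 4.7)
The plan is to start from the pointwise decomposition $z^\top H(u)A(u)z = I_1+I_2+I_3+I_4$ established in the proof of Lemma~\ref{lem.HA.1}, noting that $I_1 = s\sum_i\pi_i a_{i0} u_i^{s-2} z_i^2$ already matches the first term on the right-hand side of the target inequality and $I_2 = s(s+1)\sum_i\pi_i a_{ii} u_i^{2(s-1)}z_i^2$ has the correct form to yield the second one. Everything therefore reduces to establishing
$$I_3 + I_4 \;\ge\; -s(s-1)\sum_{i=1}^n \pi_i\bigg(\sum_{j\neq i}a_{ij}\bigg)u_i^{2(s-1)} z_i^2,$$
because the defining inequality $a_{ii} - \tfrac{s-1}{s+1}\sum_{j\neq i}a_{ij}\ge\eta_1$ will then combine with $I_2$ to produce the coefficient $\eta_1 s(s+1)\pi_i$ in front of $u_i^{2(s-1)}z_i^2$.

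My first concrete step is to exploit detailed balance $\pi_i a_{ij}=\pi_j a_{ji}$ to merge the $(i,j)$ and $(j,i)$ contributions, rewriting
$$I_3 + I_4 \;=\; s\sum_{i<j} \pi_i a_{ij}\bigl(a^2 + b^2 + 2s\, ab\bigr),$$
where $a := u_j^{s/2} u_i^{(s-2)/2} z_i$ and $b := u_i^{s/2} u_j^{(s-2)/2} z_j$. The crucial change of variables is to set $a' := u_i^{s-1} z_i$, $b' := u_j^{s-1} z_j$ and $\lambda := (u_j/u_i)^{s/2}$, which gives $a=\lambda a'$ and $b = \lambda^{-1}b'$, so the parenthesis becomes $\lambda^2 a'^2 + \lambda^{-2} b'^2 + 2s\, a'b'$, a quadratic form whose coefficients depend on $u$ only through the single scalar $\lambda$.

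The heart of the proof is then the elementary two-variable inequality
$$\lambda^2 a'^2 + \lambda^{-2} b'^2 + 2s\, a'b' \;\ge\; -(s-1)(a'^2+b'^2)\qquad\text{for all }\lambda>0,\;a',b'\in\R,$$
which is equivalent to positive semidefiniteness of the $2\times 2$ matrix with diagonal entries $\lambda^2+s-1,\lambda^{-2}+s-1$ and off-diagonal entry $s$. Its trace $\lambda^2 + \lambda^{-2} + 2(s-1)$ is manifestly positive, and a direct computation shows that its determinant factors as $(s-1)(\lambda-\lambda^{-1})^2 \ge 0$. Summing this pointwise inequality against $s\pi_i a_{ij}$ over $i<j$ and invoking detailed balance a second time in the identity
$$\sum_{i<j}\pi_i a_{ij}\bigl(u_i^{2(s-1)}z_i^2 + u_j^{2(s-1)}z_j^2\bigr) = \sum_i \pi_i\bigg(\sum_{j\neq i}a_{ij}\bigg)u_i^{2(s-1)}z_i^2$$
produces the required lower bound on $I_3+I_4$. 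Assembling $I_1$, $I_2$ and this bound yields the explicit estimate stated in the lemma; positive definiteness of $H(u)A(u)$ is then immediate for $\eta_1>0$ (since $u\in\R_+^n$ forces $\sum_i\pi_i u_i^{2(s-1)}z_i^2>0$ whenever $z\neq 0$), with the borderline case $\eta_1=0$ reducing to the semidefinite version.

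The main obstacle is uncovering the sharp constant $s-1$. A naive AM-GM applied directly to the mixed term of $I_4$ would bound $|(u_iu_j)^{s-1}z_iz_j|$ by $\tfrac12(u_i^{2(s-1)}z_i^2 + u_j^{2(s-1)}z_j^2)$ and simply discard the positive $I_3$, giving only the strictly weaker coefficient $a_{ii}-\tfrac{s}{s+1}\sum_{j\neq i}a_{ij}$ and missing the threshold $\eta_1$. The substitution $\lambda=(u_j/u_i)^{s/2}$ is precisely what allows $I_3$ and $I_4$ to be used jointly: it absorbs the $u$-dependence of the quadratic form and reduces the estimate to a single $\lambda$-uniform $2\times 2$ matrix inequality whose sharp threshold is exactly $s-1$.
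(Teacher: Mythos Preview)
Your proof is correct and follows essentially the same route as the paper's: both start from the decomposition $I_1+I_2+I_3+I_4$ of Lemma~\ref{lem.HA.1}, use detailed balance to pair the $(i,j)$ and $(j,i)$ contributions, and reduce everything to the pairwise inequality $a^2+b^2+2s\,ab\ge -(s-1)(a'^2+b'^2)$. The only cosmetic difference is in how that inequality is verified: the paper chains two AM--GM steps (first $a^2+b^2\ge 2|ab|=2|a'b'|$, then $2(s-1)|a'b'|\le (s-1)(a'^2+b'^2)$), while you check it directly via the determinant $(s-1)(\lambda-\lambda^{-1})^2\ge 0$ of the associated $2\times 2$ matrix; the two arguments are equivalent.
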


\begin{proof}
It is sufficient to estimate the sum $I_3+I_4$, 
defined in the proof of Lemma \ref{lem.HA.1}:
\begin{align*}
  I_3+I_4 &= s\sum_{i,j=1,\,i<j}^n\Big(\pi_i a_{ij}u_j^s u_i^{s-2}z_i^2
	+ \pi_j a_{ji}u_i^s u_j^{s-2}z_j^2 + s(\pi_i a_{ij}+\pi_j a_{ji})
	(u_iu_j)^{s-1}z_iz_j\Big) \\
	&\ge s\sum_{i,j=1,\,i<j}^n\Big(2\sqrt{\pi_ia_{ij}\pi_ja_{ji}}
	(u_iu_j)^{s-1}|z_iz_j| - s(\pi_i a_{ij}+\pi_j a_{ji})
	(u_iu_j)^{s-1}|z_iz_j|\Big) \\ 
	&= -s\sum_{i,j=1,\,i<j}^n\Big(s(\pi_i a_{ij}+\pi_j a_{ji}) 
	- 2\sqrt{\pi_ia_{ij}\pi_ja_{ji}}\Big)(u_iu_j)^{s-1}|z_iz_j| \\
	&\ge -\frac{s}{2}\sum_{i,j=1,\,i<j}^n\Big(s(\pi_i a_{ij}+\pi_j a_{ji}) 
	- 2\sqrt{\pi_ia_{ij}\pi_ja_{ji}}\Big)\big((u_i^{s-1}z_i)^2 + (u_j^{s-1}z_j)^2\big) \\
	&= -\frac{s}{2}\sum_{i,j=1,\,i\neq j}^n\Big(s(\pi_i a_{ij}+\pi_j a_{ji}) 
	- 2\sqrt{\pi_ia_{ij}\pi_ja_{ji}}\Big)(u_i^{s-1}z_i)^2.
\end{align*}
This expression simplifies because of the detailed balance condition:
$$
  I_3+I_4 \ge -s(s-1)\sum_{i,j=1,\,i\neq j}^n\pi_i a_{ij}(u_i^{s-1}z_i)^2,
$$
and we end up with
$$
  z^\top H(u)A(u)z \ge s\sum_{i=1}^n\pi_i a_{i0}u_i^{s-2}z_i^2
	+ s(s+1)\sum_{i=1}^n\pi_i\bigg(a_{ii} - \frac{s-1}{s+1}\sum_{j=1,\,j\neq i}^n a_{ij}
	\bigg)u_i^{2(s-1)}z_i^2,
$$
from which we conclude the result.
\end{proof}

\begin{remark}\rm
Let $n=2$. Then the condition $\eta_1\ge 0$ on the coefficients $(a_{ij})$ becomes
$a_{11} \ge a_{12}(s-1)/(s+1)$ and $a_{22} \ge a_{21}(s-1)/(s+1)$.
The product
$$
  a_{11}a_{22} \ge \bigg(\frac{s-1}{s+1}\bigg)^2a_{12}a_{21}
$$
is the same as the condition imposed in \cite[Section 5.1]{DLMT15} but weaker than 
$$
  a_{11}a_{22} \ge \bigg(\frac{s-1}{s}\bigg)^2 a_{12}a_{21},
$$
which was needed in \cite[Lemma 11]{Jue15}. Furthermore, under the
slightly stronger condition $\eta_1>0$, that is
$$
  a_{11}a_{22} > \bigg(\frac{s-1}{s+1}\bigg)^2a_{12}a_{21},
$$
our weak solution satisfies the stronger estimate $u_i^s\in L^2(0,T;H^1(\Omega))$
than that in \cite[Section 5.1]{DLMT15}.
\qed
\end{remark}

\begin{lemma}[Non detailed balance]
Let $s>1$ and let
$$
  \eta_2 := \min_{i=1,\ldots,n}\bigg(a_{ii} - \frac{1}{2(s+1)}\sum_{j=1,\,j\neq i}
	\big(s(a_{ij}+a_{ji}) - 2\sqrt{a_{ij}a_{ji}}\big)\bigg)\ge 0.
$$
Then $H(u)A(u)$ is positive definite. Moreover, if $\eta_2>0$, then,
for all $z\in\R^n$ and $u\in\R_+^n$,
$$
  z^\top H(u)A(u)z \ge s\sum_{i=1}^n a_{i0}u_i^{s-2}z_i^2
	+ \eta_2 s(s+1)\sum_{i=1}^n u_i^{2(s-1)}z_i^2.
$$
\end{lemma}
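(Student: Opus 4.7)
The plan is to mirror the proof of the previous detailed balance lemma for $s>1$ but skip the step where detailed balance was used to simplify the final expression. Concretely, I would set $\pi_i=1$ throughout and start from the decomposition \eqref{2.HA2} from the proof of Lemma \ref{lem.HA.1}, namely $z^\top H(u)A(u)z = I_1 + I_2 + I_3 + I_4$, where
$$
I_1 = s\sum_{i=1}^n a_{i0} u_i^{s-2} z_i^2, \qquad I_2 = s(s+1)\sum_{i=1}^n a_{ii} u_i^{2(s-1)} z_i^2,
$$
and $I_3, I_4$ are the off-diagonal contributions involving $a_{ij}$ with $i\neq j$.

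The core step is to estimate $I_3+I_4$ without invoking detailed balance. Repeating the two applications of $b^2+c^2\ge 2bc$ exactly as in the proof of the preceding detailed balance lemma (which uses only the inequality and not the symmetry relation until the very last simplification), I obtain
$$
I_3+I_4 \ge -\frac{s}{2}\sum_{i,j=1,\,i\neq j}^n\bigl(s(a_{ij}+a_{ji}) - 2\sqrt{a_{ij}a_{ji}}\bigr)(u_i^{s-1}z_i)^2.
$$
Note that each bracket $s(a_{ij}+a_{ji})-2\sqrt{a_{ij}a_{ji}}$ is nonnegative because $s>1$ and $a_{ij}+a_{ji}\ge 2\sqrt{a_{ij}a_{ji}}$, so the bound is genuinely a lower bound (the minus sign is real).

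Combining this lower bound with $I_2$ and rearranging the sum over $j\neq i$ into the definition of $\eta_2$ yields
$$
I_2 + I_3 + I_4 \ge s(s+1)\sum_{i=1}^n \Bigl(a_{ii} - \frac{1}{2(s+1)}\sum_{j=1,\,j\neq i}^n\bigl(s(a_{ij}+a_{ji})-2\sqrt{a_{ij}a_{ji}}\bigr)\Bigr) u_i^{2(s-1)} z_i^2 \ge \eta_2 s(s+1)\sum_{i=1}^n u_i^{2(s-1)} z_i^2.
$$
Adding $I_1$ gives the claimed quantitative inequality in the case $\eta_2>0$, and positive definiteness in the case $\eta_2\ge 0$ follows from this same chain of inequalities (together with the strictly positive contribution from $I_1$ or from the $a_{ii}>0$ needed to avoid degeneracy, precisely as in the $\eta_1\ge 0$ case of the preceding lemma).

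There is no real obstacle beyond transcribing the algebra: the whole point is that the bound on $I_3+I_4$ derived earlier was agnostic to detailed balance, so discarding the final symmetry-based simplification yields exactly the expression that defines $\eta_2$. The only mild check is that the bracket $s(a_{ij}+a_{ji})-2\sqrt{a_{ij}a_{ji}}$ remains nonnegative when $s>1$, which is immediate from AM--GM.
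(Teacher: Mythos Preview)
Your proposal is correct and follows essentially the same approach as the paper: set $\pi_i=1$, reuse the estimate on $I_3+I_4$ from the proof of the preceding detailed-balance lemma (which indeed does not invoke detailed balance until its final simplification), and combine with $I_1+I_2$ to obtain the expression defining $\eta_2$. The paper's proof is nothing more than this transcription.
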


\begin{proof}
We choose $\pi_i=1$ for $i=1,\ldots,n$. Then, as in the previous proof,
$$
  I_3+I_4 \ge -\frac{s}{2}\sum_{i,j=1,\,i\neq j}^n \big(s(a_{ij}+a_{ji})
	- 2\sqrt{a_{ij}a_{ji}}\big)u_i^{2(s-1)}z_i^2
$$
and
\begin{align*}
  z^\top & H(u)A(u)z \ge s\sum_{i=1}^n a_{i0}u_i^{s-2}z_i^2
	+ s(s+1)\sum_{i=1}^n a_{ii}u_i^{2(s-1)}z_i^2 \\
	&\phantom{xx}{}-\frac{s}{2}\sum_{i,j=1,\,i\neq j}^n \big(s(a_{ij}+a_{ji})
	- 2\sqrt{a_{ij}a_{ji}}\big)u_i^{2(s-1)}z_i^2 \\
	&= s\sum_{i=1}^n a_{i0}u_i^{s-2}z_i^2 \\
	&\phantom{xx}{}
	+ s(s+1)\sum_{i=1}^n\bigg(a_{ii} - \frac{1}{2(s+1)}\sum_{i,j=1,\,i\neq j}^n 
	\big(s(a_{ij}+a_{ji}) - 2\sqrt{a_{ij}a_{ji}}\big)\bigg)u_i^{2(s-1)}z_i^2.
\end{align*}
By definition of $\eta_2$, the result follows.
\end{proof}


\subsection{Approximate matrices}\label{sec.matrix}

Our theory requires that the range of the derivative $h'$ equals $\R^n$.
Since this is not the case if $s\neq 1$, we need to approximate 
the entropy density and consequently also the diffusion matrix. 
The approximate entropy density
\begin{equation}\label{2.heps}
  h_\eps(u) = h(u) + \eps\sum_{i=1}^n\big(u_i(\log u_i-1)+1\big)
\end{equation}
possesses the property that the range of its derivative is $\R^n$.
We set $H(u)=h''(u)=(\delta_{ij}s\pi_i u_i^{s-2})_{i,j=1,\ldots,n}$ for its
Hessian and
\begin{align}
  H_\eps(u) &= H(u) + \eps H^0(u), \quad 
	H^0_{ij}(u) = \delta_{ij}u_i^{-1}, \nonumber \\
	A_\eps(u) &= A(u) + \eps A^0(u) + \eps^\eta A^1(u), \label{2.Aeps}
\end{align}
where $\eta<1/2$ and
\begin{align*}
	& A^0_{ij}(u) = \delta_{ij}\frac{u_i}{\pi_i}\mu_i 
	- (1-\delta_{ij})\frac{u_i}{\pi_i}a_{ji}, \quad A^1_{ij}(u) = \delta_{ij}u_i, \\
  & \mu_i := \frac{\pi_i}{2}\sum_{j=1,\,j\neq i}^n\bigg(\frac{a_{ji}}{\pi_i}
	+ \frac{a_{ij}}{\pi_j}\bigg), \quad i=1,\ldots,n.
\end{align*}
The approximation $\eps^\eta A^1(u)$ is needed to achieve bounds for
$\eps^{(\eta+1)/2}\na u_i$ in $L^2$, which are necessary for the limit
$\eps\to 0$. The off-diagonal terms in $A^0(u)$ are needed to preserve
the entropy structure in the sense that $H_\eps(u)A_\eps(u)$ is still 
positive definite. This is shown in the following lemma.

\begin{lemma}\label{lem.HepsAeps}
Let $s>0$. Then, for all $z\in\R^n$ and $u\in\R_+^n$,
$$
  z^\top H_\eps(u)A_\eps(u)z
	\ge z^\top H(u)A(u)z + \eps^\eta s\sum_{i=1}^n \pi_{i}u_i^{s-1}z_i^2
	+ \eps^{\eta+1}\sum_{i=1}^n z_i^2.
$$
\end{lemma}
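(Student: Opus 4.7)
The plan is a direct calculation after expanding the product
\[
  H_\eps(u)A_\eps(u) = HA + \eps(HA^0 + H^0A) + \eps^\eta HA^1 + \eps^{1+\eta}H^0A^1 + \eps^2 H^0A^0
\]
and analyzing the quadratic form $z^\top(\cdot)z$ of each summand separately. Since $A^1$, $H$, and $H^0$ are all diagonal, two of the summands are immediate:
\[
  z^\top HA^1 z = s\sum_{i=1}^n \pi_i u_i^{s-1}z_i^2, \qquad z^\top H^0A^1 z = \sum_{i=1}^n z_i^2,
\]
which, multiplied by $\eps^\eta$ and $\eps^{1+\eta}$ respectively, reproduce exactly the last two terms on the right-hand side of the asserted inequality. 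It therefore suffices to show that the remaining two cross-contributions are nonnegative, i.e.\
$\eps\, z^\top(HA^0 + H^0A)z + \eps^2\, z^\top H^0A^0 z \ge 0$.

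For the first of these, I would compute the entries of $HA^0$ and $H^0A$ by hand. The diagonal of $HA^0 + H^0A$ at position $(i,i)$ reduces to the manifestly nonnegative expression $s u_i^{s-1}\mu_i + u_i^{-1}(a_{i0}+\sum_k a_{ik}u_k^s) + s a_{ii} u_i^{s-1}$. The off-diagonal entries at $(i,j)$ with $i\neq j$ are $-s a_{ji}u_i^{s-1}$ from $HA^0$, coming from the off-diagonal term $-u_i a_{ji}/\pi_i$ of $A^0$, and $+s a_{ij}u_j^{s-1}$ from $H^0A$; after a single dummy index swap $i\leftrightarrow j$ in the sum $\sum_{i\neq j}s a_{ij}u_j^{s-1}z_iz_j$, these two off-diagonal contributions cancel identically. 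This cancellation is precisely the reason $A^0$ was equipped with off-diagonal entries proportional to $a_{ji}$, and is the main nontrivial point.

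For the $\eps^2$ term, I would substitute $\mu_i/\pi_i = \tfrac12 \sum_{j\neq i}(a_{ji}/\pi_i + a_{ij}/\pi_j)$ into the diagonal of $H^0A^0$, split the resulting sum into two halves, and apply the swap $i\leftrightarrow j$ to the half carrying $a_{ij}/\pi_j$. Combined with the off-diagonal piece $-\sum_{i\neq j}(a_{ji}/\pi_i)z_iz_j$, this rearranges into the sum of perfect squares
\[
  z^\top H^0A^0 z = \tfrac12\sum_{i\neq j}\frac{a_{ji}}{\pi_i}(z_i-z_j)^2 \ge 0,
\]
which is the reason for the specific choice of the diagonal weights $\mu_i$. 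Adding the nonnegative contributions from $HA^0+H^0A$ and $H^0A^0$ to the explicit $\eps^\eta$ and $\eps^{1+\eta}$ terms already identified yields the claim. The main obstacle is purely algebraic bookkeeping: verifying the off-diagonal cancellation in the cross-term and the sum-of-squares identity for $H^0A^0$. Notably, no sign condition on the $a_{ij}$ beyond nonnegativity is used in this step, so the construction of $A_\eps$ preserves positive semidefiniteness universally.
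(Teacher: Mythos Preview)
Your proposal is correct and follows essentially the same decomposition and term-by-term analysis as the paper's proof; in particular, the off-diagonal cancellation in $HA^0+H^0A$ is exactly the paper's key observation. The only cosmetic difference is that for the $\eps^2$ term you obtain the exact sum-of-squares identity $z^\top H^0A^0 z=\tfrac12\sum_{i\neq j}(a_{ji}/\pi_i)(z_i-z_j)^2$, whereas the paper reaches $z^\top H^0A^0 z\ge 0$ via the elementary inequality $2z_iz_j\le z_i^2+z_j^2$.
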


\begin{proof}
We decompose the product $H_\eps(u)A_\eps(u)$ as
\begin{align*}
  H_\eps(u)A_\eps(u) &= H(u)A(u) + \eps^\eta H_\eps(u)A^1(u)
	+ \eps\big(H^0(u)A(u) + H(u)A^0(u)\big) \\
	&\phantom{xx}{}+ \eps^2 H^0(u)A^0(u).
\end{align*}
The $\eps^2$-term becomes
\begin{align*}
  (H^0(u)A^0(u))_{ij} &= \sum_{k=1}^n\delta_{ik}u_k^{-1}
	\bigg(\delta_{kj}\frac{u_k}{\pi_k}\mu_k 
	- (1-\delta_{kj})\frac{u_k}{\pi_k}a_{jk}\bigg) \\
  &= \delta_{ij}\frac{\mu_i}{\pi_i} - (1-\delta_{ij})\frac{a_{ji}}{\pi_i}.
\end{align*}
We obtain for $z\in\R^n$:
\begin{align*}
  z^\top H^0(u)A^0(u)z &= \sum_{i=1}^n\frac{\mu_i}{\pi_i}z_i^2
	- \sum_{i,j=1,\,i\neq j}^n \frac{a_{ji}}{\pi_i}z_iz_j \\
	&\ge \sum_{i=1}^n\frac{\mu_i}{\pi_i}z_i^2
	- \frac12\sum_{i,j=1,\,i\neq j}^n \frac{a_{ji}}{\pi_i}(z_i^2+z_j^2) \\
	&= \sum_{i=1}^n\frac{\mu_i}{\pi_i}z_i^2
	- \frac{1}{2}\sum_{i=1}^n\bigg(\sum_{j=1,\,j\neq i}^n\frac{a_{ji}}{\pi_i}\bigg)z_i^2
	- \frac{1}{2}\sum_{i=1}^n\bigg(\sum_{j=1,\,j\neq i}^n\frac{a_{ij}}{\pi_j}\bigg)z_i^2
	\\
	&= 0.
\end{align*}
Next, we consider the $\eps$-terms:
\begin{align*}
  (H^0(u)A(u))_{ij} &= \sum_{k=1}^n \delta_{ik}u_i^{-1}
	\bigg(\delta_{kj}\bigg(a_{k0} + \sum_{\ell=1}^n a_{k\ell}u_\ell^s 
	+ sa_{kk}u_k^s\bigg) + (1-\delta_{kj})s a_{kj}u_j^{s-1}u_k\bigg) \\
	&= \delta_{ij}\bigg(a_{i0}u_i^{-1} + \sum_{\ell=1}^n a_{i\ell}u_\ell^s u_i^{-1}
	+ sa_{ii}u_i^{s-1}\bigg) + (1-\delta_{ij})sa_{ij} u_j^{s-1}, \\
	(H(u)A^0(u))_{ij} &= \sum_{k=1}^n \delta_{ik} s\pi_i u_i^{s-2}
	\bigg(\delta_{kj}\frac{u_k}{\pi_k}\mu_k 
	- (1-\delta_{kj})\frac{u_k}{\pi_k}a_{jk}\bigg) \\
	&= \delta_{ij} su_i^{s-1}\mu_i - (1-\delta_{ij})s a_{ji}u_i^{s-1}.
\end{align*}
Summing these expressions and neglecting some positive contributions, we find that
\begin{align*}
  z^\top & \big(H^0(u)A(u) + H(u)A^0(u)\big)z
	\ge \sum_{i=1}^n (a_{i0}u_i^{-1} + sa_{ii}u_i^{s-1})z_i^2 \\
	&\phantom{xx}{}+ s\sum_{i,j=1}^n (1-\delta_{ij})a_{ij} u_j^{s-1}z_iz_j
	- s\sum_{i,j=1}^n(1-\delta_{ij})a_{ji}u_i^{s-1}z_iz_j \\
	&= \sum_{i=1}^n \big(a_{i0}u_i^{-1} + sa_{ii}u_i^{s-1})z_i^2
	\ge s\sum_{i=1}^n a_{ii}u_i^{s-1}z_i^2.
\end{align*}
Here we see how we constructed $A_{ij}^0(u)$: The off-diagonal coefficients
are chosen in such a way that the mixed terms in $z_iz_j$ cancel, and the
diagonal elements (namely $\mu_i$) are sufficiently large to obtain positive
definiteness of $H^0(u)A^0(u)$. 
Finally, we have $(H_\eps(u)A^1(u))_{ij}$ $=\delta_{ij}(s\pi_i u_i^{s-1}+\eps)$ and
$$
  z^\top H_\eps(u)A^1(u)z = \sum_{i=1}^n(s\pi_i u_i^{s-1}+\eps)z_i^2,
$$
which proves the lemma.
\end{proof}


\section{Linear transition rates: proof of Theorem \ref{thm.ex1}}\label{sec.lin}

In this section, we prove Theorem \ref{thm.ex1}. Let $T>0$, $N\in\N$,
$\tau=T/N$, $\eps>0$, and $m\in\N$ with $m>d/2$. This ensures that the embedding 
$H^m(\Omega)\hookrightarrow L^\infty(\Omega)$ is compact.
We assume that $u_i^0(x)\in[a,b]$ for $x\in\Omega$, $i=1,\ldots,n$, where
$0<a<b<\infty$. Then, clearly, $w^0=h'(u^0)\in L^\infty(\Omega;\R^n)$. 
For general $u_i^0\ge 0$, we may first consider
$u_\eps^0=(Q_\eps(u_1^0),\ldots,Q_\eps(u_n^0))$, where $0<\eps<1$ and
$Q_\eps$ is the cut-off function
$$
  Q_\eps(z) = \left\{\begin{array}{ll}
	\eps &\mbox{for }0\le z<\eps, \\
	z    &\mbox{for }\eps\le z<\eps^{-1/2}, \\
	\eps^{-1/2} &\mbox{for }z\ge\eps^{-1/2},
	\end{array}\right.
$$
and then pass to the limit $\eps\to 0$. We leave the details to the reader.

{\em Step 1: solution of an approximated problem.}
Given $w^{k-1}\in L^\infty(\Omega;\R^n)$ for $k\in\N$, 
we wish to find $w^k\in H^m(\Omega;\R^n)$ such that
\begin{align}
  \frac{1}{\tau}\int_\Omega & \big(u(w^k)-u(w^{k-1})\big)\cdot\phi dx
	+ \int_\Omega\na\phi:B(w^k)\na w^k dx \nonumber \\
	&{}+ \eps\int_\Omega\bigg(\sum_{|\alpha|=m}D^\alpha w^k\cdot D^\alpha\phi
	+ w^k\cdot\phi\bigg)dx = \int_\Omega f(u(w^k))\cdot\phi dx \label{3.tau}
\end{align}
for all $\phi\in H^m(\Omega;\R^n)$.
Here, $u(w^k)=(h')^{-1}(w^k)$, $B(w^k)=A(u(w^k))H(u(w^k))^{-1}$,
$\alpha=(\alpha_1,\ldots,\alpha_d)\in\N_0^n$ with 
$|\alpha|=\alpha_1+\ldots+\alpha_d=m$ is a multiindex, and
$D^\alpha=\pa^{|\alpha|}/(\pa x_1^{\alpha_1}\cdots\pa x_d^{\alpha_d})$
is a partial derivative of order $m$. If $k=1$, we define $w^0=h'(u^0)$. 
Equation \eqref{3.tau} is an implicit Euler
discretization of \eqref{1.eq} including an $H^{m}$ regularization term.

We recall that the entropy is given by
$$
  \HH[u] = \int_\Omega h(u)dx = \int_\Omega\sum_{i=1}^n \pi_i h_1(u_i)dx,
	\quad h_1(u_i) = u_i(\log u_i-1)+1.
$$
Then the entropy variables equal $w_i=\pa h/\pa u_i = \pi_i\log u_i$. In particular,
$h':\R_+^n\to\R^n$ is invertible on $\R^n$, i.e., Hypothesis (H1) in
\cite{Jue15} is satisfied. By Lemmas \ref{lem.db} and \ref{lem.ndb1},
$H(u)A(u)$ is positive definite, i.e., Hypothesis (H2) in \cite{Jue15} holds as well.
(At this step, we only need that $H(u)A(u)$ is positive semi-definite.)
Furthermore, $f_i$ grows at most linearly which implies that
$$
  \sum_{i=1}^n f_i(u)\pi_i\log u_i \le C_f(1+h(u)),
$$
where $C_f>0$ depends only on $(b_{ij})$ and $\pi$. This means that Hypothesis (H3) in 
\cite{Jue15} is also satisfied.
Thus, we can apply Lemma 5 in \cite{Jue15} giving a weak solution 
$w^k\in H^m(\Omega;\R^n)$ to \eqref{3.tau} satisfying the discrete entropy inequality
\begin{align}
  (1-C_f\tau) & \int_\Omega h(u(w^k))dx 
	+ \tau\int_\Omega\na w^k:B(w^k)\na w^k dx \nonumber \\
  & {}+ \eps\tau\int_\Omega\bigg(\sum_{|\alpha|=m}|D^\alpha w^k|^2
	+ |w^k|^2\bigg)dx \le \int_\Omega h(u(w^{k-1}))dx + C_f\tau\mbox{meas}(\Omega).
	\label{3.dei}
\end{align}

{\em Step 2: uniform estimates.} We set $u^k=u(w^k)$ and introduce the
piecewise in time constant functions $w^{(\tau)}(x,t)=w^k(x)$ and
$u^{(\tau)}(x,t)=u^k(x)$ for $x\in\Omega$, $t\in((k-1)\tau,k\tau]$. 
At time $t=0$, we set $w^{(\tau)}(\cdot,0)=h'(u^0)=w^0$ and $u^{(\tau)}(\cdot,0)=u^0$.
Let $u^{(\tau)}=(u_1^{(\tau)},\ldots,u_n^{(\tau)})$. We define the backward
shift operator $(\sigma_\tau u^{(\tau)})(x,t)=u(w^{k-1}(x))$ for $x\in\Omega$,
$t\in((k-1)\tau,k\tau]$. Then $u^{(\tau)}$ solves
\begin{align}
  \frac{1}{\tau}\int_0^T & \int_\Omega(u^{(\tau)}-\sigma_\tau u^{(\tau)})\cdot\phi 
	dxdt + \int_0^T\int_\Omega\na\phi:B(w^{(\tau)})\na w^{(\tau)} dxdt \nonumber \\
	&{}+ \eps\int_0^T\int_\Omega\bigg(\sum_{|\alpha|=m}D^\alpha w^{(\tau)}\cdot
	D^\alpha\phi + w^{(\tau)}\cdot\phi\bigg)dxdt 
	= \int_0^T\int_\Omega f(u^{(\tau)})\cdot\phi dxdt \label{3.tau2}
\end{align}
for piecewise constant functions $\phi:(0,T)\to H^m(\Omega;\R^n)$. By a density
argument, this equation also holds for all $\phi\in L^2(0,T;H^m(\Omega;\R^n))$
\cite[Prop.~1.36]{Rou05}.

By Lemmas \ref{lem.db} and \ref{lem.ndb1}, we have
$$
  \na w^k:B(w^k)\na w^k
	= \na u^k:H(u^k)A(u^k)\na u^k 
	\ge 2\eta_0\sum_{i=1}^n|\na u_i^k|^2,
$$
where $\eta_0=\min_{i=1,\ldots,n}\pi_i a_{ii}>0$ if detailed balance holds, and
$\eta_0>0$ is given by \eqref{1.eta0} otherwise. By the generalized Poincar\'e
inequality \cite[Chapter 2, Section 1.4]{Tem97}, it holds that
$$
  \int_\Omega\bigg(\sum_{|\alpha|=m}|D^\alpha w^k|^2
	+ |w^k|^2\bigg)dx \ge C_P\|w^k\|_{H^m(\Omega)}^2,
$$
where $C_P>0$ is the Poincar\'e constant.
Then the discrete entropy inequality \eqref{3.dei} gives
\begin{align*}
  (1-C_f\tau) & \int_\Omega h(u^k)dx 
	+ 2\eta_0\tau\int_\Omega|\na u^k|^2 dx 
  + \eps C_P\tau\|w^k\|_{H^m(\Omega)}^2 \\
	&\le \int_\Omega h(u^{k-1})dx + C_f\tau\mbox{meas}(\Omega).
\end{align*}
Summing these inequalities over $k=1,\ldots,j$, it follows that
\begin{align*}
  (1-C_f\tau) & \int_\Omega h(u^j)dx 
	+ 2\eta_0\tau\sum_{j=1}^k\int_\Omega |\na u^k|^2 dx
  + \eps C_P\tau\sum_{j=1}^k\|w^k\|_{H^m(\Omega)}^2 \\
	&\le \int_\Omega h(u^0)dx
	+ C_f\tau\sum_{k=1}^{j-1}\int_\Omega h(u^k)dx + C_f T\mbox{meas}(\Omega).
\end{align*}
By the discrete Gronwall inequality \cite{Cla87}, if $\tau<1/C_f$,
$$
   \int_\Omega h(u^j)dx + \tau\sum_{j=1}^k\int_\Omega|\na u^k|^2dx
	+ \eps\tau \sum_{j=1}^k\|w^k\|_{H^m(\Omega)}^2 \le C,
$$
where here and in the following, $C>0$ denotes a generic constant independent
of $\tau$ and $\eps$. Then,
observing that the entropy density dominates the $L^1$ norm and consequently,
$u^{(\tau)}$ is uniformly bounded in $L^\infty(0,T;L^1(\Omega;\R^n))$, we obtain 
\begin{equation}\label{3.aubin1}
  \|u^{(\tau)}\|_{L^\infty(0,T;L^1(\Omega))}
	+ \|u^{(\tau)}\|_{L^2(0,T;H^1(\Omega))}
	+ \eps^{1/2}\|w^{(\tau)}\|_{L^2(0,T;H^m(\Omega))} \le C.
\end{equation}

We wish to derive more a priori estimates. Set $Q_T=\Omega\times(0,T)$.
The Gagliardo-Nirenberg inequality with $p=2+2/d$ and
$\theta=2d(p-1)/(dp+2p)\in[0,1]$ (such that $\theta p=2$) yields for $i=1,\ldots,n$,
\begin{align}
  \|u_i^{(\tau)}\|_{L^p(Q_T)}^p
	&= \int_0^T\|u_i^{(\tau)}\|_{L^p(\Omega)}^p dt
	\le C\int_0^T\|u_i^{(\tau)}\|_{H^1(\Omega)}^{\theta p}
	\|u_i^{(\tau)}\|_{L^1(\Omega)}^{(1-\theta)p} dt \nonumber \\
	&\le C\|u_i^{(\tau)}\|_{L^\infty(0,T;L^1(\Omega))}^{(1-\theta)p}
	\|u_i^{(\tau)}\|_{L^2(0,T;H^1(\Omega))}^{\theta p} \le C. \label{3.aux}
\end{align}
In order to apply a compactness result, we need a uniform estimate for the
discrete time derivative of $u^{(\tau)}$. 
Let $q=2(d+1)$ and $\phi\in L^q(0,T;W^{m,q}(\Omega;\R^n))$. Then
$1/p+1/q+1/2=1$ and, by H\"older's inequality,
\begin{align*}
  \frac{1}{\tau}\bigg|\int_0^T\int_\Omega(u^{(\tau)}-\sigma_\tau u^{(\tau)})
	\cdot\phi dxdt\bigg|
	&\le \sum_{i,j=1}^n\|A_{ij}(u^{(\tau)})\|_{L^{p}(Q_T)}\|\na u_j^{(\tau)}\|_{L^2(Q_T)}
	\|\na\phi_i\|_{L^q(Q_T)} \\
	&\phantom{xx}{}+ \eps\|w^{(\tau)}\|_{L^2(0,T;H^m(\Omega))}
	\|\phi\|_{L^2(0,T;H^m(\Omega))} \\
	&\phantom{xx}{}+ \|f(u^{(\tau)})\|_{L^{q'}(Q_T)}\|\phi\|_{L^q(Q_T)},
\end{align*}
where $q'=(2d+2)/(2d+1)$. 
Estimate \eqref{3.aux} and the linear growth of $A_{ij}(u^{(\tau)})$ with respect
to $u^{(\tau)}$ show that the first term on the right-hand side is 
bounded. The second term is bounded because of \eqref{3.aubin1}. Finally,
$|f_i(u^{(\tau)})|$ is growing at most like $(u_i^{(\tau)})^2$ such that
$$
  \|f(u^{(\tau)})\|_{L^{q'}(Q_T)} \le C\big(1+\|u^{(\tau)}\|_{L^{2q'}(Q_T)}^2\big)
	\le C,
$$
since $2q'\le p$. We conclude that
\begin{equation}\label{3.aubin2}
  \tau^{-1}\|u^{(\tau)}-\sigma_\tau u^{(\tau)}\|_{L^{q'}(0,T;W^{m,q}(\Omega)')}
	\le C.
\end{equation}

{\em Step 3: the limit $(\eps,\tau)\to 0$.}
In view of \eqref{3.aubin1} and \eqref{3.aubin2}, we can apply the Aubin-Lions
lemma in the version of \cite{DrJu12}, which yields the existence of a subsequence,
which is not relabeled, such that, as $(\tau,\eps)\to 0$,
\begin{align}
  u^{(\tau)}\to u &\quad\mbox{strongly in }L^2(Q_T)\mbox{ and a.e.}, 
	\label{3.conv1} \\
  u^{(\tau)}\rightharpoonup u &\quad\mbox{weakly in }L^2(0,T;H^1(\Omega)), 
	\label{3.conv2} \\ 
	\eps w^{(\tau)}\to 0 &\quad\mbox{strongly in }L^2(0,T;H^m(\Omega)), 
	\label{3.conv3} \\
	\tau^{-1}(u^{(\tau)}-\sigma_\tau u^{(\tau)}) \rightharpoonup \pa_t u
	&\quad\mbox{weakly in }L^{q'}(0,T;W^{m,q}(\Omega)'), \label{3.conv4}
\end{align}
where $u=(u_1,\ldots,u_n)$. In view of the a.e.\
convergence \eqref{3.conv1} and the uniform bound \eqref{3.aux}, we have
\begin{equation}\label{3.conv5}
  u^{(\tau)}\to u\quad\mbox{strongly in }L^\gamma(Q_T)\mbox{ for all }\gamma<2+2/d.
\end{equation}
Then, together with \eqref{3.conv2}, 
$$
  u_i^{(\tau)}\na u_j^{(\tau)} \rightharpoonup u_i\na u_j
	\quad\mbox{weakly in }L^1(Q_T).
$$
We deduce from the $L^{q'}(Q_T)$ bound for $A(u^{(\tau)})\na u^{(\tau)}$ that 
$$
  B(w^{(\tau)})\na w^{(\tau)}
	= A(u^{(\tau)})\na u^{(\tau)}\rightharpoonup A(u)\na u 
	\quad\mbox{weakly in }L^{q'}(Q_T). 
$$
Furthermore, taking into account \eqref{3.conv5} and the uniform bound for
$f_i(u^{(\tau)})$ in $L^{q'}(Q_T)$, 
$$
  f_i(u^{(\tau)})\rightharpoonup f_i(u) \quad\mbox{weakly in }L^{q'}(Q_T).
$$
Then \eqref{3.conv3} and \eqref{3.conv4} allow
us to perform the limit $(\eps,\tau)\to 0$ in \eqref{3.tau2} with
$\phi\in L^q(0,T;$ $W^{m,q}(\Omega))$, which directly yields \eqref{1.weak1}. 
Since $\pa_t u=\diver(A(u)\na u)+f(u)\in L^{q'}(0,T;W^{1,q}(\Omega)')$, 
a density argument shows that the weak formulation holds for all 
$\phi\in L^q(0,T;W^{1,q}(\Omega))$. Moreover, $u_i\in W^{1,q'}(0,T;W^{1,q}(\Omega)')$
$\hookrightarrow C^0([0,T];W^{1,q}(\Omega)')$, which shows that
the initial condition is satisfied in $W^{1,q}(\Omega)'$. 
This ends the proof.

\begin{remark}[Detailed balance and vanishing self-diffusion]
\rm\label{rem.ex1} 
In the detailed balance case, we may allow for vanishing self-diffusion.
If $a_{ii}=0$ but $a_{i0}>0$,
Lemma \ref{lem.db} implies that only $\na(u_i^{(\tau)})^{1/2}$ is bounded
in $L^2(Q_T)$. This situation was considered in \cite{ChJu06} for the two-species
case, and we sketch the generalization to the $n$-species case.

Applying the Gagliardo-Nirenberg inequality similarly as in Step 2 
of the previous proof, we conclude that 
$(u_i^{(\tau)})^{1/2}\in L^{\widetilde p}(Q_T)$ with $\widetilde p=2+4/d$. Then 
$$
  \|\na u_i^{(\tau)}\|_{L^{\widetilde q}(Q_T)} 
	= 2\|(u_i^{(\tau)})^{1/2}\|_{L^{\widetilde p}(Q_T)}\
	\|\na(u_i^{(\tau)})^{1/2}\|_{L^2(Q_T)}
	\le C, \quad \widetilde q = \frac{d+2}{d+1},
$$
and thus, $(u_i^{(\tau)})$ is bounded in 
$L^{\widetilde q}(0,T;W^{1,\widetilde q}(\Omega))$ 
instead of $L^2(0,T;H^1(\Omega))$. 
This loss of regularity is problematic for the estimate of the discrete
time derivative of $u_i^{(\tau)}$. In order to compensate this, 
we need the last sum in \eqref{2.HAest}.
Indeed, Remark \ref{rem.uiuj} shows that for any $i\neq j$,
$(u_i^{(\tau)}u_j^{(\tau)})^{1/2}$ is bounded in 
$L^2(0,T;H^1(\Omega))$. Moreover, $(u_i^{(\tau)}u_j^{(\tau)})^{1/2}$ is bounded in 
$L^\infty(0,T;L^1(\Omega))$.
We infer from the Gagliardo-Nirenberg inequality that 
$(u_i^{(\tau)}u_j^{(\tau)})^{1/2}$ is bounded in $L^p(Q_T)$ with $p=2+2/d$.

Next we exploit the structure of the equations,
$$
  \sum_{j=1}^n A_{ij}(u^{(\tau)})\na u_j^{(\tau)} 
	= \na(u_i^{(\tau)} p_i(u^{(\tau)})), \quad
	p_i(u^{(\tau)}) = a_{i0} + \sum_{j=1}^n a_{ij}u_j^{(\tau)}.
$$
Thus, to show that $A_{ij}(u^{(\tau)})\na u_j^{(\tau)}$ is bounded, 
we only need to verify that $\na(u_i^{(\tau)}u_j^{(\tau)})$ is bounded:
$$
  \|\na(u_i^{(\tau)}u_j^{(\tau)})\|_{L^{q'}(Q_T)}
	\le 2\|(u_i^{(\tau)}u_j^{(\tau)})^{1/2}\|_{L^p(Q_T)}
  \|\na(u_i^{(\tau)}u_j^{(\tau)})^{1/2}\|_{L^2(Q_T)} \le C,
$$ 
where $q'=(2d+2)/(2d+1)$.
The estimate for the Lotka-Volterra term is more delicate since we have only
the regularity $u_i^{(\tau)}\in L^{1+1/d}(Q_T)$. Here, we need to suppose that 
$b_{ii}>0$, since this assumption provides an estimate for 
$(u_i^{(\tau)})^2\log u_i^{(\tau)}$ in $L^1(Q_T)$.
Then the discrete time derivative of $u_i^{(\tau)}$ is bounded in 
$L^1(0,T;W^{m,q}(\Omega)')$ -- but not in $L^{q'}(0,T;W^{m,q}(\Omega)')$.
By the Aubin-Lions lemma, there exists a subsequence (not relabeled) such that,
as $(\eps,\tau)\to 0$,
$$
  u_i^{(\tau)}\to u_i \quad\mbox{strongly in }L^{q'}(Q_T).
$$
The problem now is to show that (a subsequence of) the discrete time derivative 
of $u_i^{(\tau)}$ converges to $\pa_t u_i$ since $L^1(0,T;W^{m,q}(\Omega)')$ 
is not reflexive. 
The idea is to apply a result from \cite{Yos74} which provides a criterium for
weak compactness in $L^1(0,T;X)$, where $X$ is a reflexive Banach space.
For details, we refer to \cite{ChJu06}.
\qed
\end{remark}


\section{Nonlinear transition rates: proof of Theorem \ref{thm.ex2}}\label{sec.non}

The strategy of the proof is similar
to the proof of Theorem \ref{thm.ex1} but the nonlinear transition rates
complicate the proof significantly. 
As outlined in Section \ref{sec.matrix}, we approximate
the entropy density by \eqref{2.heps} and the diffusion matrix by \eqref{2.Aeps}.
Again, we assume without loss of generality
that $u_i^0(x)\in[a,b]$ for $x\in\Omega$, $i=1,\ldots,n$, where $0<a<b<\infty$.

{\em Step 1: solution of an approximated problem.} We employ the
transformation $w_i=\pa h_\eps/\pa u_i$ and define 
$B_\eps(w)=A_\eps(u(w))H_\eps(u(w))^{-1}$. Given $w^{k-1}\in L^\infty(\Omega;\R^n)$,
we wish to find $w^k\in H^m(\Omega;\R^n)$ solving
\begin{align}
  \frac{1}{\tau}\int_\Omega & \big(u(w^k)-u(w^{k-1})\big)\cdot\phi dx
	+ \int_\Omega\na\phi:B_\eps(w^k)\na w^k dx \nonumber \\
	&{}+ \eps\int_\Omega\bigg(\sum_{|\alpha|=m}D^\alpha w^k\cdot D^\alpha\phi
	+ w^k\cdot\phi\bigg)dx = \int_\Omega f(u(w^k))\cdot\phi dx \label{4.aux}
\end{align}
for all $\phi\in H^m(\Omega;\R^n)$. If $k=1$, we define $w^0=h_\eps'(u^0)$
such that $u(w^0) = u^0$.

The construction of $h_\eps$ ensures that Hypothesis (H1) of \cite{Jue15}
is satisfied. By Lemma \ref{lem.HepsAeps}, Hypothesis (H2) holds as well.
Also Hypothesis (H3) holds true since, for some $C_f>0$,
$$
  f(u)\cdot w 
	= \sum_{I=1}^n\bigg(b_{i0}-\sum_{j=1}^n b_{ij}u_j^\sigma\bigg)
	(su_i^s + \eps u_i\log u_i) \le C_f(1+h_\eps(u)),
$$
where $\sigma=1$ if $s>1$ and $0\le\sigma\le\max\{0,2s-1+2/d\}$ if $s<1$.
We apply Lemma 5 in \cite{Jue15} to deduce the existence of a weak solution 
$w^k\in H^m(\Omega;\R^n)$ to the above problem, which satisfies the
discrete entropy inequality
\begin{align}
  (1-C_f\tau) & \int_\Omega h_\eps(u(w^k))dx 
	+ \tau\int_\Omega\na w^k:B_\eps(w^k)\na w^k dx \nonumber \\
  & {}+ \eps\tau\int_\Omega\bigg(\sum_{|\alpha|=m}|D^\alpha w^k|^2
	+ |w^k|^2\bigg)dx \le \int_\Omega h_\eps(u(w^{k-1}))dx + C_f\tau\mbox{meas}(\Omega).
	\label{4.ei}
\end{align}
Setting $u^k:=u(w^k)$ and employing 
Lemma \ref{lem.HepsAeps}, the second integral can be estimated as follows:
\begin{align}
  \int_\Omega\na w^k:B_\eps(w^k)\na w^k dx
	&= \int_\Omega u^k:H_\eps(u^k)A_\eps(u^k)\na u^k dx \nonumber \\
	&\ge s(s+1)\int_\Omega\sum_{i=1}^n\min\{a_{ii}\pi_i,\eta_0,\eta_1\pi_i,\eta_2\}
	(u_i^k)^{2(s-1)}|\na u_i^k|^2 dx \nonumber \\
	&\phantom{xx}{}
	+ \eps^\eta s\int_\Omega\sum_{i=1}^n \pi_{i}(u_i^k)^{s-1}|\na u_i^k|^2 dx 
	+ \eps^{\eta+1}\int_\Omega\sum_{i=1}^n|\na u_i^k|^2 dx \label{4.aux2} \\
	&\ge C_s\int_\Omega\sum_{i=1}^n|\na (u_i^k)^{s}|^2 dx \nonumber \\
  &\phantom{xx}{}+ \frac{4\eps^\eta s}{(s+1)^2}\int_\Omega\sum_{i=1}^n 
	\pi_{i}|\na (u_i^k)^{(s+1)/2}|^2 dx
	+ \eps^{\eta+1}\int_\Omega\sum_{i=1}^n|\na u_i^k|^2 dx, \nonumber
\end{align}
where $C_s=s^{-1}(s+1)\min\{a_{11}\pi_1,\ldots,a_{nn}\pi_n,\eta_0,\eta_1\pi_1,
\ldots,\eta_1\pi_n,\eta_2\}$.

To finish this step, we wish to write the ``very weak'' formulation for
the solution $u^{(\tau)}$, which is defined from $u^k$ as in the previous section.
First, we observe that
\begin{align*}
  (B_\eps(w^k)\na w^k)_i 
	&= (A_\eps(u^k)\na u^k)_i 
	= \eps (A^0(u^k)\na u^k)_i + \eps^\eta (A^1(u^k)\na u^k)_i + \na(u_i^k p_i(u^k)) \\
	&= \eps (A^0(u^k)\na u^k)_i + \frac{\eps^\eta}{2}\na(u_i^k)^2 + \na(u_i^k p_i(u^k)).
\end{align*}
Next, we choose a test function 
$\phi=(\phi_1,\ldots,\phi_n)\in L^q(0,T;$ $W^{m,q}_\nu(\Omega))$, where 
$m>\max\{1,d/2\}$ and $q\ge 2$ 
will be determined below. Recall that $W^{m,q}_\nu(\Omega)$ is defined in
\eqref{1.wmq}.
Integrating by parts in \eqref{4.aux}, $u^{(\tau)}$ solves
\begin{align}
  \frac{1}{\tau} & \int_0^T\int_\Omega \big(u^{(\tau)}-\sigma_\tau u^{(\tau)}\big)
	\cdot\phi dxdt - \int_0^T\int_\Omega\sum_{i=1}^n u^{(\tau)}_ip_i(u^{(\tau)})
	\Delta\phi_i dxdt \nonumber \\
	&{}+ \eps\int_0^T\int_\Omega\na\phi:A^0(u^{(\tau)})\na u^{(\tau)}dxdt 
	- \frac{\eps^\eta}{2}\int_0^T\int_\Omega\sum_{i=1}^n (u_i^{(\tau)})^2
	\Delta\phi_i dxdt  \label{4.aux3} \\
	&{}+ \eps\int_0^T\int_\Omega\bigg(\sum_{|\alpha|=m}D^\alpha w^{(\tau)}\cdot 
	D^\alpha\phi + w^{(\tau)}\cdot\phi\bigg)dx dt
	= \int_0^T\int_\Omega f(u^{(\tau)})\cdot\phi dxdt. \nonumber
\end{align}

{\em Step 2: uniform estimates.} 
Arguing as in Step 2 of the proof of Theorem \ref{thm.ex1},
we obtain from \eqref{4.ei} and \eqref{4.aux2} for suffiently small $\tau>0$
the following uniform estimates.

\begin{lemma}\label{lem.est}
It holds for $i=1,\ldots,n$ that
\begin{align}
  \|u_i^{(\tau)}\|_{L^\infty(0,T;L^{\max\{1,s\}}(\Omega))}
	+ \|(u_i^{(\tau)})^s\|_{L^2(0,T;H^1(\Omega))} &\le C, \label{4.aubin1} \\
	\eps^{\eta/2}\|(u_i^{(\tau)})^{(s+1)/2}\|_{L^2(0,T;H^1(\Omega))}
	+ \eps^{(\eta+1)/2}\|u_i^{(\tau)}\|_{L^2(0,T;H^1(\Omega))} &\le C, \label{4.eta} \\
	\eps^{1/2}\|w_i^{(\tau)}\|_{L^2(0,T;H^m(\Omega))} &\le C. \label{4.eps}
\end{align}
\end{lemma}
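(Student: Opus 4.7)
The plan is to deduce all three estimates in a single sweep from the discrete entropy inequality \eqref{4.ei}, exactly as in Step 2 of the proof of Theorem \ref{thm.ex1}, but now using the sharper dissipation lower bound \eqref{4.aux2} in place of the two-term bound that was available in the linear case. First, I would substitute \eqref{4.aux2} into \eqref{4.ei}, sum over $k = 1, \ldots, j$ for arbitrary $j \le N = T/\tau$, and majorize $\int_\Omega h_\eps(u^0)\,dx$ uniformly in $\eps$ using the initial truncation $u_i^0(x) \in [a,b]$. This yields
\[
(1-C_f\tau)\int_\Omega h_\eps(u^j)\,dx + \tau\sum_{k=1}^{j}\mathcal{D}_\eps^k
\le C + C_f\tau\sum_{k=1}^{j-1}\int_\Omega h_\eps(u^k)\,dx,
\]
where $\mathcal{D}_\eps^k$ collects the three gradient contributions from \eqref{4.aux2} together with the $H^m$-regularization term. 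The discrete Gronwall lemma of \cite{Cla87}, applicable once $\tau < 1/(2C_f)$, then converts this into
\[
\max_{0\le k\le N}\int_\Omega h_\eps(u^k)\,dx
+ \tau\sum_{k=1}^{N}\mathcal{D}_\eps^k \le C,
\]
with $C$ independent of $\tau$ and $\eps$.

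Next, I would unpack this master estimate into the three claimed bounds. Since $h_s(z) \sim z^s/(s-1)$ as $z \to \infty$ for $s>1$ and grows linearly for $0<s<1$, the entropy bound directly gives $\|u_i^{(\tau)}\|_{L^\infty(0,T;L^{\max\{1,s\}})} \le C$. The first term of \eqref{4.aux2} supplies $\|\na(u_i^{(\tau)})^s\|_{L^2(Q_T)} \le C$; combined with $L^\infty(0,T;L^1(\Omega))$ control of $(u_i^{(\tau)})^s$ (which follows from the entropy bound and boundedness of $\Omega$) and the Poincar\'e--Wirtinger inequality $\|v\|_{H^1}^2 \le C(\|\na v\|_{L^2}^2 + \|v\|_{L^1}^2)$, this upgrades to \eqref{4.aubin1}. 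The $\eps^\eta$- and $\eps^{\eta+1}$-weighted contributions in \eqref{4.aux2}, combined with the same Poincar\'e--Wirtinger argument applied to $(u_i^{(\tau)})^{(s+1)/2}$ and $u_i^{(\tau)}$ respectively (noting $(s+1)/2 \le \max\{1,s\}$), then deliver \eqref{4.eta}. Finally, \eqref{4.eps} follows from the $H^m$-regularization term via the generalized Poincar\'e inequality already invoked in the proof of Theorem \ref{thm.ex1}.

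The only genuinely delicate point is verifying Hypothesis (H3) uniformly in $\eps$ so that the reaction contribution can be absorbed with an $\eps$-independent constant $C_f$; this is exactly what is checked in the lines preceding \eqref{4.ei}, and the restriction $\sigma<2s-1+2/d$ in the sublinear case (from \eqref{1.f2}) is essential there, whereas $\sigma=1$ is admissible when $s>1$. Everything else amounts to careful bookkeeping of exponents and to the standard translation between the piecewise-constant interpolant $u^{(\tau)}$ and the discrete sums $\tau\sum_k$.
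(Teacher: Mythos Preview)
Your proposal is correct and follows essentially the same route as the paper: the paper's proof consists of the single sentence ``Arguing as in Step 2 of the proof of Theorem \ref{thm.ex1}, we obtain from \eqref{4.ei} and \eqref{4.aux2} for sufficiently small $\tau>0$ the following uniform estimates,'' together with the remark that $h_\eps$ dominates the $L^{\max\{1,s\}}$ norm. Your write-up simply spells out those steps---substitution of \eqref{4.aux2}, summation, discrete Gronwall, and the Poincar\'e--Wirtinger upgrade from gradient bounds to full $H^1$ bounds---in more detail than the paper does.
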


Here, we used the fact that $\int_\Omega h_\eps(u^0)dx$ is uniformly bounded
and that $s<1$ implies that $u^{(\tau)}\le C(1+h(u^{(\tau)}))$ for some $C>0$, 
from which we deduce that $(u_i^{(\tau)})$ is bounded 
in $L^\infty(0,T;L^1(\Omega))$. We need more a priori estimates. 

\begin{lemma}\label{lem.Lp}
Let $s>\max\{0,1-2/d\}$. It holds that
\begin{equation}
  \|u^{(\tau)}\|_{L^{p(s)}(Q_T)} + \eps^{\eta/r(s)}\|u^{(\tau)}\|_{L^{r(s)}(Q_T)}
	\le C, \label{4.Lp}
\end{equation}
where $p(s) = 2s+(2/d)\max\{1,s\}$ and $r(s)=s+1+(2/d)\max\{1,s\}>2$.
\end{lemma}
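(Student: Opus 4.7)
The plan is to derive both bounds by Gagliardo--Nirenberg interpolation, using Lemma \ref{lem.est} to supply an $L^2(0,T;H^1(\Omega))$ bound on a suitable power of $u_i^{(\tau)}$ together with the $L^\infty(0,T;L^{\max\{1,s\}}(\Omega))$ bound on $u_i^{(\tau)}$ itself. The two inequalities in \eqref{4.Lp} will be obtained by choosing the two powers $(u_i^{(\tau)})^s$ and $(u_i^{(\tau)})^{(s+1)/2}$, respectively; the $\eps$-weighted estimate in \eqref{4.Lp} is just the second bound from \eqref{4.eta} propagated through the interpolation.

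For the unweighted estimate, set $v_i=(u_i^{(\tau)})^s$. Then $v_i$ is uniformly bounded in $L^2(0,T;H^1(\Omega))$ by \eqref{4.aubin1}, and in $L^\infty(0,T;L^{q_0}(\Omega))$ with $q_0=\max\{1,s\}/s=\max\{1,1/s\}$, the latter being a direct consequence of $u_i^{(\tau)}\in L^\infty(0,T;L^{\max\{1,s\}}(\Omega))$. The Gagliardo--Nirenberg inequality gives
$$
  \|v_i\|_{L^p(\Omega)}\le C\|v_i\|_{H^1(\Omega)}^\theta\|v_i\|_{L^{q_0}(\Omega)}^{1-\theta},
$$
where $\theta\in(0,1)$ is determined by the scaling identity $1/p=\theta(1/2-1/d)+(1-\theta)/q_0$. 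Raising to the $p$-th power and integrating in time, one forces $\theta p=2$ so that the $H^1$ factor can be absorbed by the $L^2(0,T;H^1)$ bound. A short calculation shows that the resulting exponent satisfies $sp=p(s)=2s+(2/d)\max\{1,s\}$, which unifies the two cases $s\ge 1$ (with $q_0=1$, $p=2+2/d$) and $s<1$ (with $q_0=1/s$, $p=2+2/(ds)$). Since $u_i^{(\tau)}=v_i^{1/s}$, this yields
$$
  \|u_i^{(\tau)}\|_{L^{p(s)}(Q_T)}^{p(s)}=\|v_i\|_{L^p(Q_T)}^p\le C\|v_i\|_{L^\infty(0,T;L^{q_0})}^{(1-\theta)p}\|v_i\|_{L^2(0,T;H^1)}^2\le C.
$$

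For the $\eps$-weighted estimate I would repeat the argument with $w_i=(u_i^{(\tau)})^{(s+1)/2}$. By \eqref{4.eta}, $\eps^{\eta/2}w_i$ is uniformly bounded in $L^2(0,T;H^1(\Omega))$; independently, $w_i$ lies in $L^\infty(0,T;L^{2\max\{1,s\}/(s+1)}(\Omega))$ uniformly in $\eps$. The same Gagliardo--Nirenberg scheme with $\theta p=2$ produces $w_i\in L^{p_1}(Q_T)$ with $p_1(s+1)/2=r(s)=s+1+(2/d)\max\{1,s\}$, and the bound takes the form
$$
  \|u_i^{(\tau)}\|_{L^{r(s)}(Q_T)}^{r(s)}=\|w_i\|_{L^{p_1}(Q_T)}^{p_1}\le C\|\eps^{\eta/2}w_i\|_{L^2(0,T;H^1)}^2\cdot\eps^{-\eta}\le C\eps^{-\eta},
$$
which rearranges into $\eps^{\eta/r(s)}\|u_i^{(\tau)}\|_{L^{r(s)}(Q_T)}\le C$.

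The main technical point -- and the place where the hypothesis $s>\max\{0,1-2/d\}$ enters -- is to verify that the Gagliardo--Nirenberg interpolation is admissible, i.e.\ that the choice $\theta p=2$ produces an exponent $\theta\in(0,1)$ and a target exponent $p$ lying between $q_0$ and the Sobolev exponent of $H^1(\Omega)$. This restriction on $s$ guarantees in particular that $r(s)>2$, which is precisely what the lemma records and what will be needed in subsequent compactness arguments; everything else in the proof is routine interpolation bookkeeping, requiring separate verification in the case $s\ge 1$ and $s<1$ (since $\max\{1,s\}$ changes regime), but no new ideas.
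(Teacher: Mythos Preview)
Your proposal is correct and follows essentially the same route as the paper: both estimates are obtained by Gagliardo--Nirenberg interpolation applied to $(u_i^{(\tau)})^s$ (for the $L^{p(s)}$ bound) and to $(u_i^{(\tau)})^{(s+1)/2}$ (for the $\eps$-weighted $L^{r(s)}$ bound), interpolating between the $L^2(0,T;H^1)$ bounds of Lemma~\ref{lem.est} and the $L^\infty(0,T;L^{\max\{1,s\}})$ bound, with the choice $\theta p=2$. The paper carries out the two regimes $s<1$ and $s>1$ separately with explicit values of $\theta$, whereas you compress them via $q_0=\max\{1,1/s\}$, but the argument and the resulting exponents are identical.
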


\begin{proof}
The estimates are consequences of Lemma \ref{lem.est} and the Gagliardo-Nirenberg
inequality. First, let $s<1$. We employ the Gagliardo-Nirenberg inequality, 
with $\theta=ds/(ds+1)\in(0,1)$:
\begin{align*}
  \|u_i^{(\tau)}\|_{L^{p(s)}(Q_T)}^{p(s)}
	&= \int_0^T\|(u_i^{(\tau)})^s\|_{L^{p(s)/s}(\Omega)}^{p(s)/s} dt
	\le C\int_0^T\|(u_i^{(\tau)})^s\|_{H^1(\Omega)}^{\theta p(s)/s}
	\|(u_i^{(\tau)})^s\|_{L^{1/s}(\Omega)}^{(1-\theta)p(s)/s}dt \\
	&\le C\|u_i^{(\tau)}\|_{L^\infty(0,T;L^{1}(\Omega))}^{(1-\theta)p(s)}
	\int_0^T\|(u_i^{(\tau)})^s\|_{H^1(\Omega)}^{\theta p(s)/s} dt, \quad i=1,\ldots,n.
\end{align*}
It holds that $\theta p(s)/s=2$. By
\eqref{4.aubin1}, $\|u^{(\tau)}\|_{L^{p(s)}(Q_T)}\le C$.

Next, let $s>1$. Then, with $\theta=d/(d+1)\in(0,1)$,
\begin{align*}
  \|(u_i^{(\tau)})^s\|_{L^{2+2/d}(Q_T)}^{2+2/d}
	&\le C\int_0^T\|(u_i^{(\tau)})^s\|_{H^1(\Omega)}^{\theta(2+2/d)}
	\|(u_i^{(\tau)})^s\|_{L^1(\Omega)}^{(1-\theta)(2+2/d)}dt \\
	&\le C\|(u_i^{(\tau)})^s\|_{L^2(0,T;H^1(\Omega)}^2
	\|u_i^{(\tau)}\|_{L^\infty(0,T;L^s(\Omega))}^{s(1-\theta)(2+2/d)} \le C,
\end{align*}
again taking into account estimate \eqref{4.aubin1}. 
This shows that $(u^{(\tau)})$ is bounded in $L^{p(s)}(Q_T)$.

Finally, let $\max\{0,1-2/d\}<s<1$. Then $r(s)=s+1+2/d$.
We apply the Gagliardo-Nirenberg inequality
with $\theta=d(s+1)/(2+d(s+1))\in(0,1)$ such that $\theta\cdot 2r(s)/(s+1)=2$,
\begin{align*}
  \eps^\eta\|u_i^{(\tau)}\|_{L^{r(s)}(Q_T)}^{r(s)} 
  &= \eps^{\eta}\|(u_i^{(\tau)})^{(s+1)/2}\|_{L^{2r(s)/(s+1)}(Q_T)}^{2r(s)/(s+1)} \\
	&\le \eps^{\eta}C\int_0^T\|(u_i^{(\tau)})^{(s+1)/2}
	\|_{H^1(\Omega)}^{2r(s)\theta/(s+1)}
	\|(u_i^{(\tau)})^{(s+1)/2}\|_{L^{2/(s+1)}(\Omega)}^{2r(s)(1-\theta)/(s+1)}dt \\
	&\le C\eps^{\eta}\|(u_i^{(\tau)})^{(s+1)/2}\|_{L^2(0,T;H^1(\Omega))}^2
	\|u_i^{(\tau)}\|_{L^\infty(0,T;L^1(\Omega))}^{(1-\theta)r(s)}
	\le C,
\end{align*}
using \eqref{4.aubin1} and \eqref{4.eta}. If $s>1$, we have $r(s)=s+1+2s/d$,
and applying the Gagliardo-Nirenberg inequality with 
$\theta=d(s+1)/(2s+d(s+1))\in(0,1)$, we obtain in a similar way as above
\begin{align*}
  \eps^\eta\|u_i^{(\tau)}\|_{L^{r(s)}(Q_T)}^{r(s)} 
  &= \eps^{\eta}\|(u_i^{(\tau)})^{(s+1)/2}\|_{L^{2r(s)/(s+1)}(Q_T)}^{2r(s)/(s+1)} \\
	&\le \eps^{\eta}C\int_0^T\|(u_i^{(\tau)})^{(s+1)/2}
	\|_{H^1(\Omega)}^{2r(s)\theta/(s+1)}
	\|(u_i^{(\tau)})^{(s+1)/2}\|_{L^{2s/(s+1)}(\Omega)}^{2r(s)(1-\theta)/(s+1)}dt \\
	&\le C\eps^{\eta}\|(u_i^{(\tau)})^{(s+1)/2}\|_{L^2(0,T;H^1(\Omega))}^2
	\|u_i^{(\tau)}\|_{L^\infty(0,T;L^s(\Omega))}^{(1-\theta)r(s)}
	\le C.
\end{align*}
This shows the lemma.
\end{proof}

\begin{lemma}\label{lem.ut}
Let $s>\max\{0,1-2/d\}$ and $m>\max\{1,d/2\}$. 
Then there exist $2\le q<\infty$ and $C>0$ such that
\begin{equation}\label{4.aubin2}
  \tau^{-1}\|u^{(\tau)}-\sigma_\tau u^{(\tau)}\|_{L^{q'}(0,T;W^{m,q}(\Omega)')}
	\le C,
\end{equation}
and $1/q+1/q'=1$.
\end{lemma}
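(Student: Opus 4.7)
The plan is to leverage the ``very weak'' formulation \eqref{4.aux3} as a source of test-function estimates on $\tau^{-1}(u^{(\tau)}-\sigma_\tau u^{(\tau)})$. For $\phi\in L^q(0,T;W^{m,q}_\nu(\Omega))$ with $q,m$ to be fixed, rearranging \eqref{4.aux3} expresses the integrated discrete time derivative as a sum of five integrals; the goal is to bound each of them by $C\|\phi\|_{L^q(0,T;W^{m,q})}$ with $C$ independent of $\tau$ and $\eps$. All bounds will come from H\"older's inequality combined with Lemmas \ref{lem.est} and \ref{lem.Lp} and appropriate Sobolev embeddings, and the exponent $q$ will be chosen as the maximum of the finitely many conjugate exponents arising.

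First I would handle the scaling-critical drift term $\int_0^T\!\!\int_\Omega u_i^{(\tau)}p_i(u^{(\tau)})\Delta\phi_i\,dxdt$. Since $|u_ip_i(u)|\le C(u_i+u^{s+1})$ and $u^{(\tau)}\in L^{p(s)}(Q_T)$ by Lemma \ref{lem.Lp}, this factor lies in $L^{p(s)/(s+1)}(Q_T)$. A short calculation with $p(s)=2s+(2/d)\max\{1,s\}$ shows that $p(s)/(s+1)>1$ precisely when $s>\max\{0,1-2/d\}$, so the conjugate exponent $q_1:=(p(s)/(s+1))'$ is finite. The factor $\Delta\phi_i$ is then controlled via the embedding $W^{m,q}(\Omega)\hookrightarrow W^{2,q}(\Omega)$, valid for $m\ge 2$ and $q\ge q_1$.

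Next I would dispatch the remaining terms. The non-diagonal $\eps$-term $\eps\int\!\!\int\na\phi:A^0(u^{(\tau)})\na u^{(\tau)}$ is majorized by $\eps\int|u||\na u||\na\phi|$; combining $u^{(\tau)}\in L^{r(s)}(Q_T)$ with prefactor $\eps^{-\eta/r(s)}$ and $\na u^{(\tau)}\in L^2(Q_T)$ with prefactor $\eps^{-(\eta+1)/2}$ from \eqref{4.eta}--\eqref{4.Lp} leaves an overall $\eps$-exponent $1/2-\eta(1/2+1/r(s))>0$, since $\eta<1/2$ and $r(s)>2$. The diagonal $\eps^\eta$-term on $(u_i^{(\tau)})^2$ is handled analogously with residual $\eps$-exponent $\eta(1-2/r(s))>0$. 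The regularization term yields a factor $\eps^{1/2}$ via \eqref{4.eps} and Cauchy--Schwarz, absorbed by the inclusion $L^q(0,T;W^{m,q})\hookrightarrow L^2(0,T;H^m)$ for $q\ge 2$. Finally, the reaction term is controlled by \eqref{1.f1} or \eqref{1.f2}: in both cases $f(u^{(\tau)})$ lies in $L^\alpha(Q_T)$ for some $\alpha>1$, the constraint $\sigma<p(s)-1$ in the sublinear case being tight.

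I would then pick $m>\max\{1,d/2\}$ with $m\ge 2$ and $q\ge 2$ equal to the maximum of the conjugate exponents obtained above, which yields \eqref{4.aubin2}. The main technical obstacle is bookkeeping in the limiting regime $s\downarrow\max\{0,1-2/d\}$: the drift exponent $p(s)/(s+1)$ then tends to $1$, forcing $q_1\to\infty$. The \emph{strict} inequality in the hypothesis is exactly what keeps $q$ finite, and the assumption $\eta<1/2$ (built into the construction of $A_\eps$ in Section \ref{sec.matrix}) is exactly what keeps the $\eps$-prefactors in the $A_\eps$-perturbation terms nonnegative, so that $C$ is uniform in both $\tau$ and $\eps$.
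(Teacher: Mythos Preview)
Your proposal is correct and follows essentially the same approach as the paper: both split the duality pairing from \eqref{4.aux3} into the same five contributions and bound them with identical H\"older arguments, using the $L^{p(s)/(s+1)}$ bound on $u_ip_i(u)$ for the drift, the $\eps$-weighted estimates \eqref{4.eta}--\eqref{4.Lp} with residual exponents $1-\eta/r(s)-(\eta+1)/2>0$ and $\eta(1-2/r(s))>0$ for the $A^0$ and $A^1$ perturbations, estimate \eqref{4.eps} for the regularization, and the growth condition $1+\sigma<p(s)$ for the reaction. Your identification of the tight constraints---$s>\max\{0,1-2/d\}$ forcing $p(s)/(s+1)>1$, and $\eta<1/2$ together with $r(s)>2$ keeping the $\eps$-exponents positive---matches the paper's exactly.
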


\begin{proof}
Let $\phi\in L^q(0,T;W^{m,q}_\nu(\Omega))$, where $q\ge 2$ has to be determined. 
Recall that $W^{m,q}_\nu(\Omega)$ is defined in \eqref{1.wmq} and that
$m>\max\{1,d/2\}$. Then, by \eqref{4.aux3},
\begin{align}
  \tau^{-1} & \bigg|\int_\Omega(u^{(\tau)}-\sigma_\tau u^{(\tau)})\cdot\phi dx\bigg|
	\le \eps\|w^{(\tau)}\|_{L^2(0,T;H^m(\Omega))}\|\phi\|_{L^2(0,T;H^m(\Omega))} 
	\nonumber \\
	&{}+ \sum_{i=1}^n\|u_i^{(\tau)}p_i(u^{(\tau)})\|_{L^{q'}(Q_T)}
	\|\Delta\phi_i\|_{L^q(Q_T)} 
	+ \eps\sum_{i,j=1}^n\|A_{ij}^0(u^{(\tau)})\na u_j^{(\tau)}\|_{L^{q'}(Q_T)}
	\|\na\phi_j\|_{L^q(Q_T)} \nonumber \\
	&{}+ \frac{\eps^\eta}{2}\sum_{i=1}^n\|(u_i^{(\tau)})^2\|_{L^{q'}(Q_T)}
	\|\Delta\phi_i\|_{L^q(Q_T)}
	+ \|f(u^{(\tau)})\|_{L^{q'}(Q_T)}\|\phi\|_{L^q(Q_T)} \label{4.time} \\
	&=: I_1+\cdots+I_5, \nonumber
\end{align}
where $1/q+1/q'=1$. 

By \eqref{4.eps}, $I_1$ is bounded.
We deduce from \eqref{4.Lp} that $u_i^{(\tau)}(u_j^{(\tau)})^s$ is uniformly
bounded in $L^{p(s)/(s+1)}(Q_T)$, and so does $u_i^{(\tau)} p_i(u^{(\tau)})$. 
As $s>1-2/d$, we have $q_1:=p(s)/(s+1)>1$. We conclude that $I_2$ is bounded
with $q'\le \min\{2,q_1\}$.

Since $A_{ij}^0(u^{(\tau)})$ depends linearly on $u^{(\tau)}$, it is
sufficient to prove that $\eps u_i^{(\tau)}\na u_j^{(\tau)}$ is uniformly
bounded in some $L^{q_2}(Q_T)$ for all $i,j$. 
Let $q_2=2r(s)/(r(s)+2)$, where $r(s)=s+1+2/d$ is defined in Lemma \ref{lem.Lp}.
As $r(s)>2$, it holds that $q_2>1$.
Then, by H\"older's inequality, \eqref{4.eta}, and \eqref{4.Lp},
\begin{align*}
  \eps^{\eta/r(s)+(\eta+1)/2}\|u_i^{(\tau)}\na u_j^{(\tau)}\|_{L^{q_2}(Q_T)}
	&\le \eps^{\eta/r(s)}\|u_i^{(\tau)}\|_{L^{r(s)}(Q_T)}
	\cdot\eps^{(\eta+1)/2}\|\na u_j^{(\tau)}\|_{L^2(Q_T)} \le C.
\end{align*}
The property $r(s)>2$ also implies that $\eta/r(s)+(\eta+1)/2<1$. 
This shows the bound on $I_3$ with $q'\le \min\{2,q_2\}$.

Set $q_3=r(s)/2>1$. Using the second estimate in \eqref{4.Lp} and $1-2/r(s)>0$, 
we find that
$$
  \eps^\eta\|(u_i^{(\tau)})^2\|_{L^{q_3}(Q_T)} 
	= \eps^{(1-2/r(s))\eta}\big(\eps^{\eta/r(s)}\|u_i^{(\tau)}\|_{L^{r(s)}(Q_T)}\big)^2
	\le C,
$$
proving that $I_4$ is bounded with $q'\le \min\{2,q_3\}$. 

Finally, in view of \eqref{1.f2}, 
$|f_i(u^{(\tau)})|$ grows at most like $(u_i^{(\tau)})^{1+\sigma}$, 
where $\sigma=1$ if $s>1$ and $\sigma < 2s-1+2/d$ if $s<1$.
Therefore, we have $q_4:=p(s)/(1+\sigma)>1$ and
$$
  \|f(u^{(\tau)})\|_{L^{q_4}(Q_T)} 
	\le C\big(1+\|u^{(\tau)}\|_{L^{(1+\sigma)q_4}(Q_T)}^{1+\sigma}\big) 
	= C\big(1+\|u^{(\tau)}\|_{L^{p(s)}(Q_T)}^{1+\sigma}\big) \le C.
$$ 
Hence, $I_5$ is bounded with $q'\le \min\{2,q_4\}$. 
We conclude that the lemma follows with
$q':=\min\{2,q_1,q_2,$ $q_3,q_4\}>1$ and $q=q'/(q'-1)$.
\end{proof}

{\em Step 3: the limit $(\eps,\tau)\to 0$.}
Estimates \eqref{4.aubin1} and \eqref{4.aubin2} allow us to apply the nonlinear
Aubin-Lions lemma (Theorem \ref{thm.aubin1} if $s\ge 1/2$ or Theorem \ref{thm.aubin2}
if $s<1/2$) to obtain the existence of
a subsequence which is not relabeled such that, as $(\eps,\tau)\to 0$,
$$
  u^{(\tau)}\to u \quad\mbox{strongly in }L^\gamma(Q_T) \quad\mbox{for all } 
	1 \le \gamma < p(s).
$$
In particular, $u^{(\tau)}\to u$ a.e.\ in $Q_T$.
By estimates \eqref{4.aubin1}, \eqref{4.eps}, and \eqref{4.aubin2}, we have, 
up to subsequences,
\begin{align*}
  (u_i^{(\tau)})^s \rightharpoonup u_i^s &\quad\mbox{weakly in }
	L^2(0,T;H^1(\Omega)), \\
  \eps w^{(\tau)} \to 0 &\quad\mbox{strongly in }L^2(0,T;H^m(\Omega)), \\
	\tau^{-1}(u^{(\tau)}-\sigma_\tau u^{(\tau)}) \rightharpoonup \pa_t u
	&\quad\mbox{weakly in }L^{q'}(0,T;W^{m,q}(\Omega)').
\end{align*}

We have shown in the proof of Lemma \ref{lem.ut} that 
$(u_i^{(\tau)}p_i(u^{(\tau)}))$ is bounded in $L^{p(s)/(s+1)}(Q_T)$.
Taking into account the a.e.\ convergence
$u_i^{(\tau)}p_i(u^{(\tau)})\to u_ip_i(u)$ in $Q_T$, we infer that
$$
  u_i^{(\tau)}p_i(u^{(\tau)}) \to u_ip_i(u) \quad\mbox{strongly in }L^1(Q_T).
$$
Furthermore, we proved that 
$(\eps^{\eta/r(s)+(\eta+1)/2}A_{ij}^0(u^{(\tau)})\na u_j^{(\tau)})$ 
is bounded in $L^{q_2}(Q_T)$ with $q_2=2r(s)/(r(s)+2)$ such that
\begin{align*}
  \eps A_{ij}^0(u^{(\tau)})\na u_j^{(\tau)}
	&= \eps^{1-\eta/r(s)-(\eta+1)/2}\cdot\eps^{\eta/r(s)+(\eta+1)/2}
	A_{ij}^0(u^{(\tau)})\na u_j^{(\tau)} \\
	&\to 0 \quad\mbox{strongly in }L^1(Q_T).
\end{align*}
Here, we used the fact that $\eta/r(s)+(\eta+1)/2<1$ such that 
$\eps^{1-\eta/r(s)-(\eta+1)/2}\to 0$ as $\eps\to 0$.
We know from \eqref{4.Lp} that $(\eps^{\eta/r(s)}u_i^{(\tau)})$ is bounded in
$L^2(Q_T)$. Consequently,
$$
  \eps^\eta(u_i^{(\tau)})^2 
	= \eps^{\eta(1-2/r(s))}\big(\eps^{\eta/r(s)}u_i^{(\tau)}\big)^2 \to 0
	\quad\mbox{strongly in }L^1(Q_T),
$$
since $\eps^{\eta(1-2/r(s))}\to 0$ as $\eps\to 0$ because of $r(s)>2$.
Finally, $f_i(u^{(\tau)})\to f_i(u)$ a.e.\ and the uniform bound 
$\|f_i(u^{(\tau)}\|_{L^{q_4}(Q_T)}\le C$ with $q_4=p(s)/(1+\sigma)>1$ imply that
$$
  f_i(u^{(\tau)})\to f_i(u) \quad\mbox{strongly in }L^1(Q_T).
$$
Then, performing the limit $(\eps,\tau)\to 0$ in \eqref{4.aux3} with
$\phi\in L^\infty(0,T;W^{m,\infty}_\nu(\Omega))$ , it follows
that $u$ solves \eqref{1.weak2} for such test functions.
A density argument shows that, in fact, $u$ solves \eqref{1.weak2} for
$\phi\in L^q(0,T;W^{m,q}_\nu(\Omega))$, finishing the proof.

\begin{remark}[Weak formulation]\rm\label{rem.weak}
In the superlinear case $s>1$, the solution constructed in the previous
proof satisfies \eqref{1.eq} even in the weak sense \eqref{1.weak1} with test 
functions $\phi\in L^q(0,T;W^{1,q}(\Omega))$. In order
to see this, it is sufficient to show that
$$
  A_{ij}(u^{(\tau)})\na u^{(\tau)}_j\rightharpoonup A_{ij}(u)\na u_j
	\quad\mbox{weakly in }L^{q'}(Q_T)
$$
for some $1<q'\le 2$. Because of the structure of $A_{ij}$, we only need
to verify that
\begin{align*}
  u^{(\tau)}_i(u^{(\tau)}_j)^{s-1}\na u_j^{(\tau)} 
	\rightharpoonup u_iu_j^{s-1}\na u_j
	&\quad\mbox{weakly in }L^{q'}(Q_T), \\
  (u^{(\tau)}_i)^s\na u_j^{(\tau)} 
	\rightharpoonup u_i^s\na u_j
	&\quad\mbox{weakly in }L^{q'}(Q_T).
\end{align*}
Indeed, we have the convergences $u_i^{(\tau)}\to u_i$ strongly in
$L^\gamma(Q_T)$ for any $2<\gamma<p(s)$ and $(u_i^{(\tau)})^s
\rightharpoonup u_i^s$ weakly in $L^2(0,T;H^1(\Omega))$ and hence,
$$
  u^{(\tau)}_i(u^{(\tau)}_j)^{s-1}\na u_j^{(\tau)} 
	= s^{-1}u^{(\tau)}_i\na(u^{(\tau)}_j)^s
	\rightharpoonup s^{-1}u_i\na u_j^s = u_iu_j^{s-1}\na u_j
	\quad\mbox{weakly in }L^{q'}(Q_T),
$$
choosing $q'=2\gamma/(\gamma+2)>1$. For the remaining convergence, we need to
integrate by parts. It holds for $\phi_i\in L^q(0,T;W^{2,q}_\nu(\Omega))$ that
\begin{align*}
  \int_0^T & \int_\Omega(u_i^{(\tau)})^s\na u_j^{(\tau)}\cdot\na\phi_i dxdt \\
	&= -\int_0^T\int_\Omega u_j^{(\tau)}\na(u_i^{(\tau)})^s\cdot\na\phi_i dxdt
	- \int_0^T\int_\Omega(u_i^{(\tau)})^s u_j^{(\tau)}\Delta\phi_i dxdt \\
	&\phantom{xx}{}\to -\int_0^Tu_j\na u_i^s\cdot\na\phi_i dxdt 
	- \int_0^T\int_\Omega u_i^s u_j\Delta\phi_i dxdt
	= \int_0^t\int_\Omega u_i^s\na u_j\cdot\na\phi_i dxdt.
\end{align*}
A density argument shows that the weak formulation also holds for
$\phi_i\in L^q(0,T;W^{1,q}(\Omega))$.
\qed
\end{remark}

\begin{remark}[Vanishing self-diffusion]\label{rem.aii}\rm
Assume that $a_{i0}>0$ and $a_{ii}=0$. 
The difficulty is that we obtain a uniform bound only for $\na (u_i^{(\tau)})^{s/2}$
instead for $\na(u_i^{(\tau)})^s$ in $L^2(Q_T)$. In order to compensate this
loss of regularity, we need additional assumptions, namely either 
$s>\max\{1,d/2\}$ (superlinear rates); or $0<s<1$, $d=1$, and
$\sigma<s+1$ (sublinear rates).
Under these conditions, the statement of Theorem \ref{thm.ex2} holds true.

For the proof, we remark that
the regularity for $(u_i^{(\tau)})^s$ in $L^2(0,T;H^1(\Omega))$ is employed
in the estimate of $u_i^{(\tau)}$ in $L^{p(s)}(Q_T)$. If only $(u_i^{(\tau)})^{s/2}$
is bounded in $L^2(0,T;H^1(\Omega))$, the Gagliardo-Nirenberg inequality
gives a weaker result: for $0<s<1$ with $\theta=ds/(ds+2)$ and $\rho=s+2/d$,
\begin{align*}
  \|u_i^{(\tau)}\|_{L^{\rho}(Q_T)}^\rho
	&= \|(u_i^{(\tau)})^{s/2}\|_{L^{2\rho/s}(Q_T)}^{2\rho/s}
	\le C\int_0^T\|(u_i^{(\tau)})^{s/2}\|_{H^1(\Omega)}^{2\theta\rho/s}
	\|(u_i^{(\tau)})^{s/2}\|_{L^{2/s}(\Omega)}^{2(1-\theta)\rho/s}dt \\
	&\le C\|(u_i^{(\tau)})^{s/2}\|_{L^2(0,T;H^1(\Omega))}^2
	\|u_i^{(\tau)}\|_{L^\infty(0,T;L^1(\Omega))}^{(1-\theta)\rho} \le C,
\end{align*}
since $2\theta\rho/s=2$; and for $s>1$ with
$\theta=d/(d+2)$ and $\rho=s+2s/d$,
$$
  \|u_i^{(\tau)}\|_{L^\rho(Q_T)}^\rho 
	\le C\|(u_i^{(\tau)})^{s/2}\|_{L^2(0,T;H^1(\Omega))}^2
	\|u_i^{(\tau)}\|_{L^\infty(0,T;L^s(\Omega))}^{(1-\theta)\rho} \le C,
$$
since $2\theta\rho/s=2$. Consequently, $(u_i^{(\tau)})$
is bounded in $L^\rho(Q_T)$ with $\rho=s+(2/d)\max\{1,s\}$. 

We claim that this estimate is sufficient to derive a bound for the discrete
time derivative. Since the $\eps$-terms in \eqref{4.time} do not need
the estimate for $u_i^{(\tau)}$ in $L^\rho(Q_T)$, it is sufficient to bound
$u_i^{(\tau)}p_i(u^{(\tau)})$ and $(u_i^{(\tau)})^{\sigma+1}$ in some $L^{q'}(Q_T)$
with $q'>1$. This is possible as long as $\rho>s+1$ and $\rho>\sigma+1$, respectively.
If $0<s<1$, these two inequalities are equivalent to $d=1$ and $\sigma<s-1+d/2=s+1$.
If $s>1$ (in this case $\sigma=1$), they give the restriction $s>d/2$, thus 
$s>\max\{1,d/2\}$. This shows the claim.
\qed
\end{remark}


\section{Additional and auxiliary results}\label{sec.aux}

\subsection{Detailed balance condition}\label{sec.db}

We wish to interpret the detailed balance condition \eqref{1.db}
and to explain how the numbers $\pi_i$ can be computed from the coefficients
$(a_{ij})$. We assume that the coefficients are normalized in the sense
that $a_{ij}\ge 0$ and $\sum_{k=1,\,k\neq j}a_{kj}\le 1$ for all $i,j$.
The idea is to use a probabilistic approach, interpreting
the coefficients $a_{ij}$ as the transition rates between two
discrete states $i$ and $j$ of the state space $S:=\{1,\ldots,n\}$. Then
$$
  a_{ij} = \mbox{P}(X_k=j|X_{k-1}=i)
$$
is the conditional probability for a random variable $X:\N\to S$.
This variable represents the Markov chain associated to the stochastic matrix
$Q=(Q_{ij})_{i,j}\in\R^{n\times n}$, defined by
$Q_{ij}=a_{ij}$ for $i\neq j$ and $Q_{ii}=1-\sum_{i=1,\,i\neq j}a_{ij}$ for
$i=1,\ldots,n$. 
A Markov chain is called reversible if there exists a probability distribution
$\pi=(\pi_1,\ldots,$ $\pi_n)$ on $S$ (called an invariant measure) such that
\begin{equation}\label{2.db}
  \pi_i a_{ij} = \pi_j a_{ji}, \quad i,j=1,\ldots,n.
\end{equation}
The Markov chain can be interpreted as a directed graph, where the states $i\in S$
are the nodes and the edges are labeled by the probabilities $a_{ij}$ 
going from state $i$ to state $j$. 

The state space $S$ can be partitioned into so-called communicating classes.
We write $i\to j$ if there exist $i_0,i_1,\ldots,i_{n+1}\in S$ such that
$a_{i_0,i_1}a_{i_1,i_2}\cdots a_{i_n,i_{n+1}}>0$ for $i_0=i$ and $i_{n+1}=j$.
We say that $i$ communicates with $j$ if both $i\to j$ and $j\to i$. A set
of states $\sigma\subset S$ is a communicating class if every pair in 
$\sigma$ communicates with each other. This defines an equivalence relation, 
and communicating classes are the equivalence classes.

Consider the following properties:
\begin{description}
\item[(A1)] For all $i,j\in S$, it holds that either $a_{ij}=a_{ji}=0$
or $a_{ij}a_{ji}>0$.
\item[(A2)] For any periodic cycle $i_0,i_1,\ldots,i_{m+1}=i_0$, 
$$
  \prod_{k=0}^m a_{i_k,i_{k+1}} = \prod_{k=0}^m a_{i_{k+1},i_k}.
$$
\end{description}
The detailed balance condition \eqref{2.db} implies (A1) and (A2). It is shown in 
\cite{Suo79} that the converse is true and that the invariant measure $\pi$ can be
constructed explicitly.

\begin{proposition}\label{prop.im}
Let (A1)-(A2) hold. 
Then there exists an invariant measure $\pi=(\pi_1,\ldots,$ $\pi_n)$ such that
the detailed balance condition \eqref{2.db} is satisfied. Moreover,
$\pi$ can be computed explicitly by choosing an $i_0$ in each communicating
class and defining $\pi_j$ for $i_0$ and $j$ belonging in the same class by
$$
  \pi_j := \prod_{k=1}^{n-1}\frac{a_{i_k,i_{k+1}}}{a_{i_{k+1},i_k}}
$$
depending only on $i_0$ and $j$, where $i_1,i_2,\ldots,i_n=j$ are such that
$a_{i_k,i_{k+1}}>0$ for $k=0,\ldots,n-1$.
\end{proposition}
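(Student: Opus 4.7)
The plan is to construct $\pi$ class-by-class, using the cycle identity (A2) to show that the product formula is path-independent, and then to verify the detailed balance relation either by extending a path or by a zero-zero cancellation.

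First I would decompose $S$ into its communicating classes under the equivalence relation introduced before the proposition, and treat each class separately. Within a fixed class $\sigma$, I choose a representative $i_0\in\sigma$ and set $\pi_{i_0}:=1$ (a global normalization to obtain a probability distribution is applied at the end). For any $j\in\sigma$, by definition of communication there exists a sequence $i_0,i_1,\ldots,i_n=j$ in $\sigma$ with $a_{i_k,i_{k+1}}>0$ for $k=0,\ldots,n-1$; hypothesis (A1) then gives $a_{i_{k+1},i_k}>0$ as well, so the ratios in the formula are well defined. I then set
\[
  \pi_j := \prod_{k=0}^{n-1}\frac{a_{i_k,i_{k+1}}}{a_{i_{k+1},i_k}},
\]
which is the quantity forced upon us by iterating the desired identity $\pi_{i_{k+1}}a_{i_{k+1},i_k}=\pi_{i_k}a_{i_k,i_{k+1}}$ along the chosen path.

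The core step, and the one I expect to be the main obstacle, is showing that $\pi_j$ does not depend on the chosen path. Let $i_0,i_1,\ldots,i_n=j$ and $i_0,i_1',\ldots,i_m'=j$ be two such paths. Concatenating the first with the reversal of the second produces a closed cycle $i_0,i_1,\ldots,i_n=i_m',i_{m-1}',\ldots,i_0$, whose forward and backward edges all carry positive weight by (A1). Applying (A2) to this cycle and rearranging, the product of $a_{\cdot,\cdot}/a_{\cdot,\cdot}$ ratios along the forward half equals the corresponding product along the second path, so the two definitions of $\pi_j$ coincide. This is where (A2) is indispensable; (A1) only guarantees that no denominator vanishes.

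With $\pi$ well defined on each class, I would verify detailed balance \eqref{2.db} case by case. If $i,j$ lie in the same class and $a_{ij}>0$, then (A1) gives $a_{ji}>0$, and extending any path from $i_0$ to $i$ by the single edge $i\to j$ yields $\pi_j=\pi_i\,a_{ij}/a_{ji}$, which is exactly detailed balance. If $a_{ij}=0$, then $a_{ji}=0$ by (A1), and both sides vanish. If $i$ and $j$ lie in different communicating classes, then neither $i\to j$ nor $j\to i$ can hold, hence $a_{ij}=a_{ji}=0$ and again both sides are zero.

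Finally, since the construction only determines $\pi$ up to a positive multiplicative constant on each communicating class, I normalize (e.g.\ by a convex combination with weights summing to one across the finitely many classes) to obtain a probability distribution on $S$ that still satisfies \eqref{2.db} on every pair. This yields the desired invariant measure with the explicit formula stated in the proposition.
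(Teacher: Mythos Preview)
Your argument is correct and is the standard proof of Kolmogorov's reversibility criterion: define $\pi_j$ via the product of ratios along any path from a base point, use the cycle condition (A2) together with (A1) to obtain path-independence, and then read off detailed balance by extending a path by one edge. Note that the paper does not supply its own proof of this proposition; it simply attributes the result to Suomela \cite{Suo79}, so there is nothing to compare against beyond observing that your sketch reproduces the classical argument one finds in that reference. One small remark: the product in the paper's displayed formula starts at $k=1$, whereas your (correct) version starts at $k=0$; the edge $i_0\to i_1$ must of course be included for the formula to yield detailed balance, so this appears to be a typographical slip in the paper rather than an issue with your proof.
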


For instance, if $n=3$, we need to suppose (according to (A2)) that
\begin{equation}\label{2.n3}
  a_{12}a_{23}a_{31} = a_{13}a_{32}a_{21},
\end{equation}
and the invariant measure is given by $\pi=c(1,a_{12}/a_{21},a_{13}/a_{31})$,
where $c=(1+a_{12}/a_{21}+a_{13}/a_{31})^{-1}$.

The following result relates the detailed balance condition
and the symmetry of the matrix $H(u)A(u)$.

\begin{proposition}\label{prop.db}
The following three properties are equivalent:
\begin{description}
\item[(i)] Graph-theoretical condition: (A1) and (A2) hold.
\item[(ii)] Detailed balance condition: $\pi_i a_{ij} = \pi_j a_{ji}$ for
$i\neq j$.
\item[(iii)] Symmetry: The matrix $H(u)A(u)$ is symmetric.
\end{description}
\end{proposition}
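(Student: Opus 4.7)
My plan is to prove the circle of implications (iii) $\Rightarrow$ (ii) $\Rightarrow$ (i) $\Rightarrow$ (ii) $\Rightarrow$ (iii), where most of the work is a direct computation and the remaining direction is Proposition \ref{prop.im}.

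First, I would compute the off-diagonal entries of $H(u)A(u)$ explicitly. Since $H(u)$ is diagonal, $(H(u)A(u))_{ij} = s\pi_i u_i^{s-2}A_{ij}(u)$, and for $i\neq j$ this reduces to $s^2\pi_i a_{ij}(u_iu_j)^{s-1}$. The diagonal entries $(H(u)A(u))_{ii}$ are trivially equal to themselves, so symmetry of $H(u)A(u)$ is equivalent to
\[
  s^2\pi_i a_{ij}(u_iu_j)^{s-1} = s^2\pi_j a_{ji}(u_iu_j)^{s-1}\quad\text{for all }i\neq j \text{ and all }u\in\R_+^n.
\]
Dividing by the positive factor $s^2(u_iu_j)^{s-1}$, this is precisely $\pi_i a_{ij}=\pi_j a_{ji}$. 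This gives the equivalence (ii) $\Leftrightarrow$ (iii).

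Next, (i) $\Rightarrow$ (ii) is exactly the content of Proposition \ref{prop.im}, which supplies the invariant measure $\pi$ from the graph-theoretical conditions (A1)--(A2). So the only remaining step is (ii) $\Rightarrow$ (i). Assume detailed balance with $\pi_i>0$. Then for any pair $i,j$, the relation $\pi_i a_{ij}=\pi_j a_{ji}$ with positive $\pi$ immediately forces $a_{ij}=0 \Leftrightarrow a_{ji}=0$, which is (A1). For (A2), consider a cycle $i_0,i_1,\ldots,i_{m+1}=i_0$. If some factor $a_{i_k,i_{k+1}}$ vanishes, then by (A1) the reverse factor $a_{i_{k+1},i_k}$ vanishes as well, and both products in (A2) are zero. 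Otherwise every factor is positive, and
\[
  \prod_{k=0}^m \frac{a_{i_k,i_{k+1}}}{a_{i_{k+1},i_k}} = \prod_{k=0}^m \frac{\pi_{i_{k+1}}}{\pi_{i_k}} = 1
\]
by telescoping along the closed cycle. Rearranging yields (A2).

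The only conceptually nontrivial ingredient is the construction of $\pi$ in (i) $\Rightarrow$ (ii), but we can simply cite Proposition \ref{prop.im} (and through it \cite{Suo79}). The main potential obstacle is making sure that detailed balance is genuinely equivalent, and not just sufficient, for symmetry; but as shown above, evaluating $(H(u)A(u))_{ij}-(H(u)A(u))_{ji}$ at any single $u\in\R_+^n$ with $u_iu_j>0$ already extracts $\pi_i a_{ij}-\pi_j a_{ji}$, so the equivalence is clean and requires no further regularity or range argument.
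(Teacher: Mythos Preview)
Your proof is correct and follows essentially the same route as the paper: the equivalence (ii) $\Leftrightarrow$ (iii) via the explicit off-diagonal entries $s^2\pi_i a_{ij}(u_iu_j)^{s-1}$, the implication (i) $\Rightarrow$ (ii) by citing Proposition \ref{prop.im}, and the converse (ii) $\Rightarrow$ (i) directly from detailed balance. You have simply spelled out the details that the paper's proof leaves to the reader.
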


\begin{proof}
The implication (i) $\Leftarrow$ (ii) is shown in Proposition \ref{prop.im}.
The converse can be proved directly using the detailed balance condition.
Finally, the equivalence (ii) $\Leftrightarrow$ (iii) follows from an explicit
calculation of $H(u)A(u)$.
\end{proof}

\begin{remark}\rm
The equivalence of the symmetry of $H(u)A(u)$ and the detailed balance condition
is related to the Onsager principle of thermodynamics. Indeed,
the diffusion matrix $B=A(u)H(u)^{-1}$ in
$$
  \pa_t u - \diver(B\na w) = f(u),
$$
where $w=h'(u)$ is the vector of entropy variables, is the Onsager matrix which
is symmetric, according to Onsager, if and only if the thermodynamic system is
time-reversible. Time-reversibility means that the Markov chain associated to
the matrix $(a_{ij})$ is reversible, and the symmetry of $B$ is equivalent to
the symmetry of $H(u)A(u)$. Thus, the equivalence (ii) $\Leftrightarrow$ (iii)
corresponds to the equivalence of the symmetry of $B$ and the time-reversibility.
For details on the detailed balance principle in thermodynamics, we refer to
\cite{DeMa62}.
\qed
\end{remark}


\subsection{Nonlinear Aubin-Lions lemmas}

Let $\Omega\subset\R^d$ ($d\ge 1$) be a bounded domain with Lipschitz
boundary. Let $(u^{(\tau)})$ be a family of nonnegative functions 
which are piecewise constant in time with uniform time step size $\tau>0$.
We introduce the time shift operator
$(\sigma_\tau u^{(\tau)})(t)=u^{(\tau)}(t-\tau)$ for $t\ge\tau$.

If there exist uniform estimates for the gradient $(\na u^{(\tau)})$ and the
discrete time derivative $\tau^{-1}(u^{(\tau)}-\sigma_\tau u^{(\tau)})$, 
then, by the Aubin-Lions theorem and under suitable conditions on the spaces,
$(u^{(\tau)})$ is relatively compact in some $L^q$ space. 
In the case of nonlinear transition
rates, we obtain uniform estimates only for $(\na(u^{(\tau)})^s)$, where $s>0$.
Then relative compactness follows from a nonlinear version of the Aubin-Lions
theorem \cite{CJL13}. We recall a special case of this result.

\begin{theorem}[Nonlinear Aubin-Lions lemma for $s\ge 1/2$]\label{thm.aubin1} 
Let $s\ge 1/2$, $m\ge 0$, $1\le q<\infty$,
and there exists $C>0$ such that for all $\tau>0$,
$$
  \|(u^{(\tau)})^s\|_{L^2(0,T;W^{1,q}(\Omega))}
	+ \tau^{-1}\|u^{(\tau)}-\sigma_\tau u^{(\tau)}\|_{L^1(\tau,T;H^m(\Omega)')}
	\le C.
$$ 
Then there exists a subsequence of $(u^{(\tau)})$, which is not relabeled,
such that, as $\tau\to 0$,
$$
  u^{(\tau)}\to u\quad\mbox{strongly in }L^{2s}(0,T;L^{ps}(\Omega)),
$$
where $p\ge \max\{1,1/s\}$ is such that the embedding $W^{1,q}(\Omega)
\hookrightarrow L^p(\Omega)$ is compact.
\end{theorem}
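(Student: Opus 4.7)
The plan is to reduce the claim to the classical Aubin--Lions lemma applied to the nonlinear composition $v^{(\tau)}:=(u^{(\tau)})^s$. By hypothesis, $v^{(\tau)}$ is bounded in $L^2(0,T;W^{1,q}(\Omega))$, and the embedding $W^{1,q}(\Omega)\hookrightarrow L^p(\Omega)$ is compact. Thus, once a uniform bound on the discrete time increments of $v^{(\tau)}$ in a sufficiently weak space is established, the Simon/Dubinskii variant of Aubin--Lions, adapted to piecewise constant sequences with $L^1$-in-time time-derivative bounds, yields a subsequence $v^{(\tau)}\to v$ strongly in $L^2(0,T;L^p(\Omega))$ and, after further extraction, a.e.\ in $Q_T$. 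Setting $u:=v^{1/s}$ gives $u^{(\tau)}\to u$ a.e.; the conclusion in $L^{2s}(0,T;L^{ps}(\Omega))$ then follows from Vitali's theorem and the equi-integrability provided by the uniform $L^{2s}(L^{ps})$ bound, which in turn comes from the spatial regularity of $v^{(\tau)}$ together with $p\ge\max\{1,1/s\}$ (so that $ps\ge 1$).

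The key step is transferring the negative-norm bound on $u^{(\tau)}-\sigma_\tau u^{(\tau)}$ to a corresponding estimate on $v^{(\tau)}-\sigma_\tau v^{(\tau)}$. I would use the mean-value identity
\begin{equation*}
v^{(\tau)}-\sigma_\tau v^{(\tau)} = s\,\xi^{s-1}\bigl(u^{(\tau)}-\sigma_\tau u^{(\tau)}\bigr),
\end{equation*}
with $\xi$ lying between $u^{(\tau)}$ and $\sigma_\tau u^{(\tau)}$, and test against $\phi\in H^m(\Omega)$. Since $m>d/2$ ensures $H^m(\Omega)\hookrightarrow L^\infty(\Omega)$, the desired estimate reduces to controlling the pairing
\begin{equation*}
\bigl\langle u^{(\tau)}-\sigma_\tau u^{(\tau)},\,s\xi^{s-1}\phi\bigr\rangle_{H^m(\Omega)',H^m(\Omega)},
\end{equation*}
which requires identifying $\xi^{s-1}\phi$ as an element of $H^m(\Omega)$ with a norm controlled by $\|v^{(\tau)}\|_{L^2(W^{1,q})}$, $\|\phi\|_{H^m(\Omega)}$, and $s$. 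For $s\ge 1$ this is straightforward, since $\xi^{s-1}$ is a polynomial-type factor in $u^{(\tau)}$ and $\sigma_\tau u^{(\tau)}$. For $1/2\le s<1$, the exponent $2s-1\ge 0$ remains nonnegative, and a standard elementary comparison inequality of the form $|a^s-b^s|^2\le C_s(a+b)^{2s-1}|a-b|$ valid for $a,b\ge 0$ and $s\ge 1/2$ allows one to absorb the singularity of $\xi^{s-1}$ using the $W^{1,q}$-regularity of $v^{(\tau)}$, again yielding a bound of order $\tau$ for $v^{(\tau)}-\sigma_\tau v^{(\tau)}$ in the appropriate dual norm.

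The main obstacle is this nonlinear transfer: the given control on $\partial_t u^{(\tau)}$ lives in the non-reflexive $L^1(H^m(\Omega)')$, and the substitution $u\mapsto u^s$ must be carried out entirely inside the duality pairing rather than pointwise. The restriction $s\ge 1/2$ is essential precisely to keep $2s-1$ nonnegative, so that $a\mapsto a^{2s-1}$ is monotone on $[0,\infty)$ and the comparison inequality above remains valid; for $s<1/2$ the factor $\xi^{s-1}$ becomes too singular to be absorbed by the spatial regularity, which is the reason the separate Theorem~\ref{thm.aubin2} is needed in that range. The remaining ingredients---Rellich--Kondrachov spatial compactness, extraction of a.e.\ subsequences, and the Vitali upgrade of convergence---are standard.
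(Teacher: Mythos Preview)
The paper does not actually prove this theorem; it is quoted from \cite{CJL13} as a known result. Your proposed argument, however, has a genuine gap at the ``key step.'' To pass from the $H^m(\Omega)'$ bound on $u^{(\tau)}-\sigma_\tau u^{(\tau)}$ to one on $v^{(\tau)}-\sigma_\tau v^{(\tau)}$ via the pairing $\langle u^{(\tau)}-\sigma_\tau u^{(\tau)},\,s\xi^{s-1}\phi\rangle$, you need $\xi^{s-1}\phi\in H^m(\Omega)$. But $\xi$ inherits at best $W^{1,q}$ spatial regularity from $u^{(\tau)}$ and $\sigma_\tau u^{(\tau)}$, and multiplication by a $W^{1,q}$ function is not a bounded operator on $H^m(\Omega)$ once $m\ge 2$ (note that the theorem allows arbitrary $m\ge 0$; the extra hypothesis $m>d/2$ is your own addition). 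The pointwise inequality $|a^s-b^s|^2\le C_s(a+b)^{2s-1}|a-b|$ does not rescue the argument either: you control only the distributional action of $u^{(\tau)}-\sigma_\tau u^{(\tau)}$ against $H^m$ test functions, not $|u^{(\tau)}-\sigma_\tau u^{(\tau)}|$ itself, so a pointwise comparison cannot be inserted into the $H^{-m}$ estimate.

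The approach in \cite{CJL13} avoids this transfer altogether by working in the Dubinski\u{\i} framework. One introduces the seminormed cone $M=\{w\ge 0:w^s\in W^{1,q}(\Omega)\}$ with $[w]_M:=\|w^s\|_{W^{1,q}(\Omega)}^{1/s}$. Since $g\mapsto g^{1/s}$ is continuous from nonnegative $L^p(\Omega)$ functions to $L^{ps}(\Omega)$ (here $ps\ge 1$), the compact embedding $W^{1,q}(\Omega)\hookrightarrow L^p(\Omega)$ yields a compact embedding $M\hookrightarrow L^{ps}(\Omega)$, and $L^{ps}(\Omega)\hookrightarrow H^m(\Omega)'$ continuously. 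The Dubinski\u{\i}--Simon lemma for piecewise constant families then applies \emph{directly to $u^{(\tau)}$}, using the given time-shift bound on $u^{(\tau)}$ rather than on $(u^{(\tau)})^s$, and delivers relative compactness in $L^{2s}(0,T;L^{ps}(\Omega))$. The point is that by allowing the ``compact'' space to be a nonlinear cone, one never needs to differentiate the composition $u\mapsto u^s$ in time.
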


Theorem \ref{thm.aubin1} can be extended to the case $s<1/2$ if 
$(u^{(\tau)})$ is additionally bounded in $L^\infty(0,T;L^1(\Omega))$ which
generally follows from the entropy inequality. This result is new.

\begin{theorem}[Nonlinear Aubin-Lions lemma for $s < 1/2$]\label{thm.aubin2}
Let $\max\{0,1/2-1/d\}<s<1/2$, $m\ge 0$, 
and there exists $C>0$ such that for all $\tau>0$,
$$
  \|u^{(\tau)}\|_{L^\infty(0,T;L^1(\Omega))}
  + \|(u^{(\tau)})^s\|_{L^2(0,T;H^1(\Omega))}
	+ \tau^{-1}\|u^{(\tau)}-\sigma_\tau u^{(\tau)}\|_{L^1(\tau,T;H^m(\Omega)')}
	\le C.
$$
Then there exists a subsequence of $(u^{(\tau)})$, which is not relabeled,
such that, as $\tau\to 0$,
$$
  u^{(\tau)}\to u\quad\mbox{strongly in }L^{1}(0,T;L^{1}(\Omega)).
$$
\end{theorem}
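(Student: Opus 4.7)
The plan is to reduce Theorem \ref{thm.aubin2} to a Dubinskii-type nonlinear compactness result applied to a suitable seminormed cone, exploiting the extra $L^\infty(0,T;L^1(\Omega))$ control (which is the only ingredient beyond Theorem \ref{thm.aubin1}).

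First, I would establish a uniform bound $\|u^{(\tau)}\|_{L^{p(s)}(Q_T)} \le C$ with $p(s)=2s+2/d$. Since $(u^{(\tau)})^s$ is bounded in $L^2(0,T;H^1(\Omega))$ and, equivalently from the $L^\infty L^1$ hypothesis, in $L^\infty(0,T;L^{1/s}(\Omega))$, the Gagliardo--Nirenberg inequality with $\theta=ds/(ds+1)$ and target exponent $p(s)/s$ gives the desired bound, exactly as in the proof of Lemma \ref{lem.Lp}. The assumption $s>\max\{0,1/2-1/d\}$ is precisely what is needed so that $p(s)>1$, which in turn provides the equi-integrability of $\{u^{(\tau)}\}$ in $L^1(Q_T)$.

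Next, I would introduce the nonnegative seminormed cone
$$
  \mathcal{M} := \{ u\in L^1(\Omega)\,:\, u\ge 0,\ u^s\in H^1(\Omega)\},\qquad [u] := \|u\|_{L^1(\Omega)} + \|u^s\|_{H^1(\Omega)},
$$
and verify that the embedding $\mathcal{M}\hookrightarrow L^1(\Omega)$ is compact. Indeed, if $\{u_n\}\subset\mathcal{M}$ satisfies $[u_n]\le C$, then $u_n^s$ is bounded in $H^1(\Omega)\cap L^{1/s}(\Omega)$, Rellich--Kondrachov yields a subsequence with $u_n^s\to v$ strongly in $L^2(\Omega)$ and pointwise a.e., whence $u_n=(u_n^s)^{1/s}\to v^{1/s}$ a.e.; the pointwise Gagliardo--Nirenberg argument from Step~1 (again using $s>1/2-1/d$) gives a uniform bound $\|u_n\|_{L^{p(s)}(\Omega)}\le C$, and Vitali's theorem upgrades a.e.\ convergence to strong $L^1(\Omega)$ convergence. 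From Step~1 the sequence $(u^{(\tau)})$ is uniformly bounded in $L^2(0,T;\mathcal{M})$.

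Finally, since $m>d/2$ gives $H^m(\Omega)\hookrightarrow L^\infty(\Omega)$ and therefore $L^1(\Omega)\hookrightarrow H^m(\Omega)'$ continuously, the three ingredients required by the Dubinskii compactness lemma for piecewise-constant-in-time functions are in place: boundedness in $L^2(0,T;\mathcal{M})$, compact embedding $\mathcal{M}\hookrightarrow L^1(\Omega)\hookrightarrow H^m(\Omega)'$, and the discrete time-derivative bound $\tau^{-1}\|u^{(\tau)}-\sigma_\tau u^{(\tau)}\|_{L^1(\tau,T;H^m(\Omega)')}\le C$. Applying this lemma, which is an adaptation to the discrete setting along the lines of Theorem~3 of \cite{CJL13}, I obtain (along a subsequence) $u^{(\tau)}\to u$ strongly in $L^2(0,T;L^1(\Omega))$, and a fortiori in $L^1(0,T;L^1(\Omega))$, which is the conclusion.

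The principal obstacle is the compact embedding $\mathcal{M}\hookrightarrow L^1(\Omega)$ in the middle step: the seminorm $[\cdot]$ furnishes no direct $L^p$-control with $p>1$, so the uniform integrability needed to convert a.e.\ convergence into $L^1$ convergence must be produced from the $H^1$-bound on $u^s$ through the Gagliardo--Nirenberg argument, and this argument degenerates exactly at the threshold $s=1/2-1/d$. Thus the sharpness of the hypothesis on $s$ is built into the compact-embedding step, and everything else is the standard Dubinskii mechanism already employed for $s\ge 1/2$ in Theorem~\ref{thm.aubin1}.
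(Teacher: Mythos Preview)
Your argument is correct but takes a genuinely different route from the paper. The paper does not build a seminormed cone or invoke Dubinski\u{\i} directly; instead it reduces Theorem~\ref{thm.aubin2} to Theorem~\ref{thm.aubin1} in one line via the chain rule and H\"older. Writing $\nabla(u^{(\tau)})^{1/2}=(2s)^{-1}(u^{(\tau)})^{1/2-s}\,\nabla(u^{(\tau)})^s$ and applying H\"older with exponents $2/(1-2s)$ and $2$, one gets a uniform bound on $(u^{(\tau)})^{1/2}$ in $L^2(0,T;W^{1,1/(1-s)}(\Omega))$, since $\|(u^{(\tau)})^{1/2-s}\|_{L^{2/(1-2s)}(\Omega)}=\|u^{(\tau)}\|_{L^1(\Omega)}^{(1-2s)/2}$ is controlled by the $L^\infty(0,T;L^1(\Omega))$ hypothesis. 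The condition $s>1/2-1/d$ is then exactly what makes $W^{1,1/(1-s)}(\Omega)\hookrightarrow L^2(\Omega)$ compact, so Theorem~\ref{thm.aubin1} with exponent $1/2$, $q=1/(1-s)$, $p=2$ applies and gives relative compactness in $L^1(Q_T)$. Compared with your approach, the paper's trick is shorter and avoids re-running the Gagliardo--Nirenberg/Vitali machinery inside a compact-embedding proof; your construction, on the other hand, is more self-contained and makes explicit where the threshold $s=1/2-1/d$ enters (equi-integrability via $p(s)>1$). One small point: you invoke $m>d/2$ to embed $L^1(\Omega)\hookrightarrow H^m(\Omega)'$, whereas the statement only assumes $m\ge 0$; the paper sidesteps this by delegating the embedding hypotheses to Theorem~\ref{thm.aubin1}, so in your write-up you should either restrict to $m>d/2$ (which is what is used in the application) or route the argument through the already-proved $s\ge 1/2$ case as the paper does.
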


\begin{proof}
The result follows from Theorem \ref{thm.aubin1} 
and the H\"older inequality. Indeed, we have
\begin{align*}
  \|\na (u^{(\tau)})^{1/2} & \|_{L^2(0,T;L^{1/(1-s)}(\Omega))}
	= (2s)^{-1}\|(u^{(\tau)})^{1/2-s}\na(u^{(\tau)})^s
	\|_{L^2(0,T;L^{1/(1-s)}(\Omega))} \\
	&\le (2s)^{-1}\|(u^{(\tau)})^{(1-2s)/2}\|_{L^\infty(0,T;L^{2/(1-2s)}(\Omega))}
	\|\na(u^{(\tau)})^s\|_{L^2(0,T;L^2(\Omega))} \\
	&= \|u^{(\tau)}\|_{L^\infty(0,T;L^1(\Omega))}^{(1-2s)/2}
	\|\na(u^{(\tau)})^s\|_{L^2(0,T;L^2(\Omega))} \le C.
\end{align*}
Therefore, $(u^{(\tau)})^{1/2}$ is uniformly bounded in
$L^2(0,T;W^{1,1/(1-s)}(\Omega))$. By Rellich-Kondra\-chov's theorem,
the embedding $W^{1,1/(1-s)}(\Omega)\hookrightarrow L^2(\Omega)$ is compact
for $s>0$ if $d\le 2$ and $s>1/2-1/d$ if $d\ge 3$. 
Applying Theorem \ref{thm.aubin1} with $s=1/2$, $q=1/(1-s)$, and $p=2$, we infer that 
$(u^{(\tau)})$ is relatively compact in $L^1(0,T;L^1(\Omega))$.
\end{proof}


\subsection{Increasing entropies}\label{sec.incr}

If detailed balance or a weak cross-diffusion condition hold, 
we have shown that the entropy is nonincreasing in time
along solutions to \eqref{1.eq}-\eqref{1.bic}. In this section, we show
that the entropy may be increasing for small times if these conditions do
not hold. To simplify the presentation, we restrict ourselves to the case 
$n=3$ (three species), $s=1$ (linear transition rates), and $\Omega=(0,1)$.

\begin{lemma}[Vanishing diffusion coefficients $a_{i0}$]
Let $a_{13}=a_{32}=a_{21}=1$ and $a_{ij}=0$ else.
For any $\eps>0$, there exist initial data $u^0$ such that
$$
  \frac{d\HH}{dt}[u^0] \ge \frac{1}{\eps}.
$$
In particular, if $t\mapsto \HH[u(t)]$ is continuous, there exists $t_0>0$
such that $t\mapsto \HH[u(t)]$ is increasing on $[0,t_0]$.
\end{lemma}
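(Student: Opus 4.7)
The plan is to compute the entropy dissipation explicitly and then construct initial data realising a negative and arbitrarily large value of the underlying quadratic form. Taking $\pi_i=1$ throughout (detailed balance fails for the cyclic coefficients $a_{13}=a_{32}=a_{21}=1$) and, for simplicity, switching off the reaction ($b_{i0}=b_{ij}=0$ in \eqref{1.f1}, so $f\equiv 0$; the reaction contribution is in any case of lower order in the parameter $T$ used below), an integration by parts against the no-flux boundary condition gives
\[
\frac{d\HH}{dt}[u^0] = -\int_0^1 (\pa_x u^0)^{\top} H(u^0) A(u^0)\,\pa_x u^0\,dx.
\]
A direct computation from \eqref{1.A} produces
\[
H(u)A(u) = \begin{pmatrix} u_3/u_1 & 0 & 1 \\ 1 & u_1/u_2 & 0 \\ 0 & 1 & u_2/u_3 \end{pmatrix},
\]
so that for every $z\in\R^3$,
\[
z^\top H(u)A(u)z = \frac{u_3}{u_1}z_1^2 + \frac{u_1}{u_2}z_2^2 + \frac{u_2}{u_3}z_3^2 + z_1z_2 + z_1z_3 + z_2z_3.
\]
The task is thus to force this form to be negative of magnitude at least $1/\eps$ after spatial integration.

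The key heuristic is that the asymmetric cross term $z_1 z_3$ can dominate if we arrange $u_1\gg u_3\gg u_2$ (driving $u_3/u_1,\,u_2/u_3\to 0$) while taking $z_1$ and $z_3$ of opposite signs. First I would fix a smooth cut-off $\phi\in C^\infty([0,1])$ with $\phi(0)=0$, $\phi'(0)=\phi'(1)=0$, $0\le\phi'\le 1$, and $\phi'\equiv 1$ on $[1/4,3/4]$, and then propose the ansatz
\[
u_1^0(x) = T^2 + T\phi(x),\qquad u_2^0(x) = \eps,\qquad u_3^0(x) = T - \tfrac{T}{2}\phi(x),
\]
with a large parameter $T>0$ to be chosen. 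All three components are positive, the gradients vanish at $x\in\{0,1\}$ (so the homogeneous Neumann data hold), and the uniform bounds $u_3/u_1\le 1/T$ and $u_2/u_3\le 2\eps/T$ hold on $(0,1)$.

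Since $\pa_x u^0=(T\phi',0,-T\phi'/2)$, a short substitution yields
\[
(\pa_x u^0)^\top H(u^0) A(u^0)\,\pa_x u^0 = T^2\phi'(x)^2\Bigl(\tfrac{u_3}{u_1}+\tfrac{u_2}{4u_3}-\tfrac12\Bigr)\le -\tfrac{T^2}{4}\phi'(x)^2
\]
once $T$ is large enough to absorb $1/T+\eps/(2T)$ into $1/4$. Integrating over the plateau $[1/4,3/4]$ of $\phi'$ gives $d\HH/dt[u^0]\ge T^2/8$, and the choice $T=T(\eps)$ sufficiently large in terms of $1/\sqrt{\eps}$ closes the quantitative estimate. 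The concluding sentence of the lemma then follows from continuity of $t\mapsto\HH[u(t)]$ together with the strict positivity just obtained.

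The main obstacle is the design of the ansatz: three constraints must be reconciled simultaneously -- placing $u$ in the ``negative region'' of the non-definite quadratic form on a set of positive measure, respecting the no-flux boundary condition, and producing a gain that scales like $T^2$ so that any $\eps>0$ can be matched. The hierarchy $u_1\sim T^2\gg u_3\sim T\gg u_2\sim\eps$ is calibrated precisely to suppress the positive diagonal contributions $u_3/u_1$ and $u_2/u_3$ uniformly in $x$, leaving the negative cross term $z_1z_3\sim -T^2/2$ to dominate the integrand throughout the plateau of $\phi'$.
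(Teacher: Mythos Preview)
Your argument is correct and reaches the same conclusion, but by a genuinely different construction than the paper's. The paper keeps the densities bounded (between $1$ and $10$), sets $z_1=\pa_x u_1^0=0$, and achieves largeness of $d\HH/dt$ by concentrating steep gradients of $u_2^0,u_3^0$ (of size $1/\eps$) on a short interval of length $\eps$; the negative contribution then comes from the cross term $z_2z_3$ together with the inequality $\tfrac{1}{u_2}+\tfrac{u_2}{u_3}-1<0$ for the chosen values. You instead set $z_2=\pa_x u_2^0=0$, work on a fixed plateau, and introduce a large amplitude parameter $T$ with the hierarchy $u_1\sim T^2\gg u_3\sim T\gg u_2$; this kills the diagonal terms $u_3/u_1$ and $u_2/u_3$ uniformly so that the cross term $z_1z_3\sim -T^2/2$ dominates. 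Both routes exploit the indefiniteness of the same quadratic form at different vertices of the cycle; your smooth ansatz has the minor advantage that the Neumann condition and the formal identity $d\HH/dt=-\int(\pa_x u)^\top HA\,\pa_x u\,dx$ are classically justified.

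One small correction: your parenthetical claim that ``the reaction contribution is in any case of lower order in the parameter $T$'' is not true for the Lotka--Volterra terms \eqref{1.f1} with your ansatz, since $u_1\sim T^2$ makes $\int f(u)\cdot h'(u)\,dx$ grow faster than $T^2$. This does not affect your proof, because you have already set $f\equiv 0$ (as the paper implicitly does in this section), but the aside should be dropped.
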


\begin{proof}
Observe that \eqref{2.n3} is not satisfied, and hence detailed balance does not hold.
Furthermore, we have
$$
  H(u)A(u) = \begin{pmatrix} 
	1/u_1 & 0 & 0 \\ 0 & 1/u_2 & 0 \\ 0 & 0 & 1/u_3 \end{pmatrix}
	\begin{pmatrix}
	u_3 & 0 & u_1 \\ u_2 & u_1 & 0 \\ 0 & u_3 & u_2 \end{pmatrix}
	= \begin{pmatrix}
	u_3/u_1 & 0 & 1 \\ 1 & u_1/u_2 & 0 \\ 0 & 1 & u_2/u_3 \end{pmatrix}.
$$
Let $0<\eps<0.5$ and define $u^0=(u^0_1,u^0_2,u^0_3)$ by 
$u_1^0(x)=1$ for $x\in(0,1)$ and
\begin{align*}
  u_2^0(x) &= \left\{\begin{array}{ll}
	3 &\mbox{for } 0<x<0.5, \\
	3 - \eps^{-1}(x-0.5) &\mbox{for } 0.5<x<0.5+\eps, \\
	2 &\mbox{for }0.5+\eps<x<1,
	\end{array}\right. \\
  u_3^0(x) &= \left\{\begin{array}{ll}
	9 &\mbox{for } 0<x<0.5, \\
	9 + \eps^{-1}(x-0.5) &\mbox{for } 0.5<x<0.5+\eps, \\
	10 &\mbox{for }0.5+\eps<x<1,
	\end{array}\right. 
\end{align*}
Then
\begin{align*}
  \int_0^1(\pa_xu^0)^\top H(u^0)A(u^0)\pa_x u^0 dx
	&= \frac{1}{\eps^2}\int_{0.5}^{0.5+\eps}\bigg(\frac{1}{u_2^0(x)} - 1
	+ \frac{u_2^0(x)}{u_3^0(x)}\bigg)dx \\
	&\le \frac{1}{\eps}\bigg(\frac12 - 1 + \frac{3}{9}\bigg) = -\frac{1}{6\eps},
\end{align*}
which implies that $(d\HH/dt)[u^0]\ge 1/(6\eps)$.
\end{proof}

One may ask if a similar result as above holds if the diffusion coefficients
$a_{i0}$ do not vanish, since they give positive contributions to the
entropy production. The next lemma shows that the entropy may be increasing
even if $a_{i0}>0$ is chosen arbitrarily.

\begin{lemma}[Positive diffusion coefficients $a_{i0}$]
Let $a_{13}=a_{32}=a_{21}=1$, $a_{i0}>0$ for $i=1,2,3$, and $a_{ij}=0$ else.
For any $\eps>0$, there exist initial data $u^0$ such that
$$
  \frac{d\HH}{dt}[u^0] \ge \frac{1}{\eps}.
$$
In particular, if $t\mapsto \HH[u(t)]$ is continuous, there exists $t_0>0$
such that $t\mapsto \HH[u(t)]$ is increasing on $[0,t_0]$.
\end{lemma}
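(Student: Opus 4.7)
The plan is to mimic the construction of the previous lemma but scale the background densities so that the positive contributions $a_{i0}/u_i$ on the diagonal of $H(u)A(u)$ become negligible compared with the destabilising off-diagonal contribution coming from the cyclic coefficients $a_{13}=a_{32}=a_{21}=1$. With the extra linear terms $a_{i0}$, a direct computation gives
\begin{equation*}
  H(u)A(u) = \begin{pmatrix}
  (a_{10}+u_3)/u_1 & 0 & 1 \\ 1 & (a_{20}+u_1)/u_2 & 0 \\ 0 & 1 & (a_{30}+u_2)/u_3
  \end{pmatrix},
\end{equation*}
and for $v=(0,v_2,v_3)$ the quadratic form reduces to
\begin{equation*}
  v^\top H(u)A(u) v
  = \frac{a_{20}+u_1}{u_2}\,v_2^2 + v_2v_3 + \frac{a_{30}+u_2}{u_3}\,v_3^2.
\end{equation*}
The ratio $a_{i0}/u_i$ can be made arbitrarily small by taking $u_i$ large, whereas the mixed term $v_2 v_3$ depends only on the gradients. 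This motivates a cascading scaling $u_2^0\sim M$, $u_3^0\sim M^2$ with $M$ large compared with $a_{10},a_{20},a_{30}$.

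Concretely, fix $M>0$ (to be chosen) and define $u_1^0\equiv 1$,
\begin{align*}
  u_2^0(x) &= \begin{cases} M & x\in(0,0.5),\\ M - \eps^{-1}(x-0.5) & x\in(0.5,0.5+\eps),\\ M-1 & x\in(0.5+\eps,1),\end{cases}\\
  u_3^0(x) &= \begin{cases} M^2 & x\in(0,0.5),\\ M^2 + \eps^{-1}(x-0.5) & x\in(0.5,0.5+\eps),\\ M^2+1 & x\in(0.5+\eps,1).\end{cases}
\end{align*}
Then $\pa_x u_1^0\equiv 0$, while $\pa_x u_2^0=-1/\eps$ and $\pa_x u_3^0=1/\eps$ on the transition window, so the entropy production integral reduces to
\begin{equation*}
  \int_0^1 (\pa_x u^0)^\top H(u^0)A(u^0)\,\pa_x u^0\,dx
  = \frac{1}{\eps^2}\int_{0.5}^{0.5+\eps}\!\!
  \Bigl[\tfrac{a_{20}+1}{u_2^0} + \tfrac{a_{30}+u_2^0}{u_3^0} - 1\Bigr]dx.
\end{equation*}
On the transition window the bracketed quantity is bounded above by $(2a_{20}+3)/M + a_{30}/M^2 + 1/M - 1$, which is strictly less than $-1/2$ once $M$ is chosen sufficiently large (depending only on $a_{10},a_{20},a_{30}$). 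The integral is therefore at most $-1/(2\eps)$, giving $\tfrac{d\HH}{dt}[u^0]\ge 1/(2\eps)$; replacing $\eps$ by $\eps/2$ in the construction yields the stated bound $\ge 1/\eps$.

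The conceptual obstacle is purely that of choosing the right scale: the off-diagonal contribution $v_2v_3=-1/\eps^2$ is $O(1)$ in $M$, while every competing term on the diagonal of $H(u)A(u)$ carries a factor $1/u_i$ that can be driven to zero. The cascade $u_2^0\sim M$, $u_3^0\sim M^2$ (rather than $u_2^0=u_3^0\sim M$) is essential because the diagonal term $(a_{30}+u_2)/u_3$ would otherwise tend to a positive constant, which would prevent the bracket from becoming negative. Once this scaling is in place, the rest of the argument is the same transparent pointwise estimate used in the proof of the preceding lemma, and no continuity/regularity subtlety beyond the piecewise-linear setup is needed.
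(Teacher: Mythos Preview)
Your argument is correct, but it follows a genuinely different route from the paper's. The paper does \emph{not} introduce an auxiliary large parameter; instead it chooses explicit values of $u_1^0,u_2^0,u_3^0$ tailored to $a_{20},a_{30}$ (namely $u_1^0=\tfrac{a_{20}(2a_{20}+a_{30})}{8a_{20}+4a_{30}}$, $u_2^0$ transitioning from $4a_{20}$ to $3a_{20}$, $u_3^0$ from $8a_{20}+4a_{30}$ to $9a_{20}+4a_{30}$) so that a direct evaluation of the bracket already gives the fixed negative upper bound $-1/12$, yielding $(d\HH/dt)[u^0]\ge a_{20}^2/(12\eps)$. Your approach instead exploits the structural observation that each diagonal contribution carries a factor $1/u_i$ and can therefore be killed by the cascading scaling $u_2^0\sim M$, $u_3^0\sim M^2$; this makes the mechanism (why the $a_{i0}$'s cannot save monotonicity) more transparent and would adapt more readily to other cyclic configurations, at the cost of an extra asymptotic parameter. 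Two minor points: $M$ in fact only needs to depend on $a_{20},a_{30}$ (not $a_{10}$, since $\pa_x u_1^0\equiv 0$), and your stated upper bound for the bracket has a stray $+1/M$, but the conclusion is unaffected since the expression tends to $-1$ as $M\to\infty$.
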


\begin{proof}
We choose the initial datum
\begin{align*}
  u_1^0(x) &= \frac{a_{20}(2a_{20}+a_{30})}{8a_{20}+4a_{30}}, \\
	u_2^0(x) &= \left\{\begin{array}{ll}
	4a_{20} &\mbox{for } 0<x<0.5, \\
	a_{20}(4 - \eps^{-1}(x-0.5)) &\mbox{for } 0.5<x<0.5+\eps, \\
	3a_{20} &\mbox{for }0.5+\eps<x<1,
	\end{array}\right. \\
	u_3^0(x) &= \left\{\begin{array}{ll}
	8a_{20}+4a_{30} &\mbox{for } 0<x<0.5, \\
	a_{20}(8 - \eps^{-1}(x-0.5)) + 4a_{30} &\mbox{for } 0.5<x<0.5+\eps, \\
	9a_{20} + 4a_{30} &\mbox{for }0.5+\eps<x<1,
	\end{array}\right.
\end{align*}
Then
\begin{align*}
  \int_0^1 & (\pa_xu^0)^\top H(u^0)A(u^0)\pa_x u^0 dx \\
	&= \int_{0.5}^{0.5+\eps}\bigg(\frac{u_1^0}{u_2^0}(\pa_x u_2^0)^2
	+ \frac{a_{20}}{u_2^0}(\pa_x u_2^0)^2 + \pa_x u_2^0\pa_x u_3^0
	+ \frac{u_2^0+a_{30}}{u_3^0}(\pa_x u_3^0)^2\bigg)dx \\
	&\le \frac{a_{20}^2}{\eps^2}
	\int_{0.5}^{0.5+\eps}\bigg(\frac{2a_{20}+a_{30}}{3(8a_{20}+4a_{30})}
	\frac{a_{20}}{3a_{20}} - 1 + \frac{4a_{20} + a_{30}}{8a_{20}+4a_{30}}\bigg)dx \\
	&\le \frac{a_{20}^2}{\eps^2}\bigg(\frac{1}{12} - \frac{1}{3} - 1 + \frac12\bigg)
	= -\frac{a_{20}^2}{12\eps},
\end{align*}
which proves the result.
\end{proof}


\end{document}